\def\Z{\mathbb Z}
\def\Q{\mathbb Q}
\def\R{\mathbb R}
\def\C{\mathbb C}
\def\F{\mathbb F}
\let\phi\varphi
\def\t{\mathfrak t}
\def\O{\mathcal O}
\def\CP{\mathbb{CP}}
\DeclareMathOperator{\lin}{lin}
\DeclareMathOperator{\Hom}{Hom}
\DeclareMathOperator{\lk}{lk}
\def\at#1{{\left.#1\right|}}
\def\pair#1{{\langle #1\rangle}}
\def\FF{\mathcal F}
\def\Sigmamax{\Sigma_{\rm max}}
\def\TC{T}
\def\TA{\mathbb T}
\def\PP{P\mkern -1mu P}
\def\Hodd{H^{\rm odd}}
\def\cf{\emph{cf.}}
\def\ie{\emph{i.e.}}
\def\arxiv#1{\href{http://arxiv.org/abs/#1}{\texttt{arXiv:#1}}}
\let\osubsection\subsection
\def\subsection#1{\osubsection{\boldmath{#1}}}
\theoremstyle{plain}
\newtheorem{theorem}{Theorem}[section]
\newtheorem{proposition}[theorem]{Proposition}
\newtheorem{lemma}[theorem]{Lemma}
\newtheorem{corollary}[theorem]{Corollary}
\theoremstyle{definition}
\newtheorem{question}[theorem]{Question}
\theoremstyle{remark}
\newtheorem*{acknowledgments}{Acknowledgments}
\numberwithin{equation}{section}
\begin{document}

\title[Describing toric varieties and their equivariant cohomology]%
  {Describing toric varieties and\\their equivariant cohomology}
\author{Matthias Franz}
\thanks{The author was supported by an NSERC Discovery Grant.}
\address{Department of Mathematics, University of Western Ontario,
  London, Ont.\ N6A\;5B7, Canada}
\email{mfranz@uwo.ca}
\subjclass[2010]{Primary 14M25; secondary 55N91, 57Q15}
\keywords{Toric variety, equivariant CW~complex,
  piecewise polynomials, torsion-free cohomology}

\begin{abstract}
  Topologically, compact toric varieties can be constructed
  as identification spaces: they are quotients of the
  product of a compact torus and the order complex of the fan.
  We give a detailed proof of this fact, extend it to the non-compact case
  and draw several, mostly cohomological conclusions.

  In particular, we show
  that the equivariant integral cohomology of a toric variety can be
  described in terms of piecewise polynomials on the fan
  if the ordinary integral cohomology is concentrated in even degrees.
  This generalizes a result of Bahri--Franz--Ray to the non-compact case.
  We also investigate torsion phenomena in integral cohomology.
\end{abstract}

\maketitle

\section{Introduction}

\noindent
Let $\TC\cong(S^1)^n$ be a torus with Lie algebra~$\t\cong\R^n$,
and $P$ a full-dimensional polytope in the dual~$\t^*$ of~$\t$,
integral % or rational?
with respect to the weight lattice. %~$M\subset\t^*$.
The toric variety~$X_P$ associated with~$P$ is projective,
and $\TC$ as well as its complexification~$\TA\cong(\C^*)^n$ act on it.

In the article~\cite{Jurkiewicz:81},
Jurkiewicz showed that one can recover $P$
as the image of the moment map~$X_P\to\t^*$,
and that this map is the quotient of~$X_P$ by the action
of the compact torus~$\TC$. (See also Atiyah~\cite{Atiyah:82}.)
Since $X_P/\TC$ is canonically embedded into~$X_P$
as its non-negative part, %~$X_P^+$,
% (\cf~\cite[Prop.~1.8]{Oda:88} or~\cite[Sec.~4.1]{Fulton:93}),
Jurkiewicz's result immediately yields a
$\TC$-equivariant homeomorphism
\begin{equation}
  X_P\approx\bigl(\TC\times P\bigr)\bigm/\mathord\sim\,,
\end{equation}
where two points $(t_1,x_1)$,~$(t_2,x_2)\in \TC\times P$
are identified if $x_1=x_2$, say
with supporting face~$f$ of~$P$,
% contained in the relative interior of the face~$f$ of~$P$,
and if $t_1 t_2^{-1}$ lies in the subtorus % ~$\TC_f$
of~$\TC$
whose Lie algebra is the annihilator of~$\lin(f-x_1)$.
% This follows readily from two facts:
% \begin{enumerate}
% \item $P\approx X_P/\TC$.
%   (For example, consider an embedding~$P\subset\CP^m$
%   and the moment map~$\mu\colon\CP^m\to\R^m$.
%   Then $P$ can be identified with the image~$\mu(X_P)$,
%   and the fibres of~$\mu\colon X_P\to P$ are the $\TC$-orbits.)
%   \comment{Atiyah}
% \item The quotient~$X_P/\TC$ is homeomorphic to
%   the non-negative part~$X_P^+$ of~$X_P$.
%   \comment{ref Oda}
% \end{enumerate}
The aim of this note is to prove a similar description
for arbitrary toric varieties.

Let $\Sigma$ be a not necessarily complete fan in~$\t$,
rational with respect to the
lattice %~$N\subset\t$
of $1$-parameter subgroups of~$\TC$,
% (identified with their differentials),
and let $\FF(\Sigma)$ be the order complex of~$\Sigma$.
% Its topological realization~$|\FF(\Sigma)|$ is an $n$-ball
% if $\Sigma$ is complete.
(If $\Sigma$ is the normal fan of the polytope~$P$, then
$\FF(\Sigma)$ can be thought of as the barycentric subdivision of~$P$.)
Define the $\TC$-space
\begin{equation}\label{topology-complete-C}
  Y_\Sigma=
    \bigl(\TC\times |\FF(\Sigma)|\bigr)\bigm/\mathord\sim\,,
\end{equation}
with the following identification:
For~$x\in|\FF(\Sigma)|$,
say with supporting simplex~$\alpha=(\sigma_0,\dots,\sigma_p)$,
one has $(t_1,x)\sim(t_2,x)$
if $t_1 t_2^{-1}$ % \in \TC_{\sigma_0}$, where $\TC_{\sigma_0}$ denotes
lies in the subtorus~$\TC_{\sigma_0}$ of~$\TC$
whose Lie algebra is the linear span of~$\sigma_0$.

This construction has appeared in Davis--Januszkiewicz's work
on (quasi)toric manifolds \cite{DavisJanuszkiewicz:91},
but without linking it with algebraic geometry.
Fischli and Yavin \cite{Fischli:92}, \cite{FischliYavin:94}, \cite{Yavin:91}
attributed construction~\eqref{topology-complete-C} to MacPherson
and used it as the \emph{definition} of a toric variety.
Because since then several authors
\cite{Jordan:97}, \cite{WelkerZieglerZivaljevic:99},
\cite{Civan:02}, \cite{Panov:06}
%
% just quoted:
% BuchstaberPanov:02, p. 83
% Buchstaber, Ray: Invitation to toric topology
% Feichtner: Rational versus real cohomology algebras of low-dimensional toric varieties
% Karshon: Periodic Hamiltonian flows on four dimensional manifolds
% Masuda, Suh: Classification problems of toric manifolds via topology
%
have applied \eqref{topology-complete-C}
to toric varieties in the usual sense,
we feel that it might be beneficial
to supply a justification for this.

\begin{theorem}\label{main-result-C}
  % Then $|\FF(\Sigma)|\approx D^n$ and
  % Let $k=\C$,~$\R$ or~$\R_+$.
  If $\Sigma$ is complete, then $Y_\Sigma$ is
  $\TC$-equivariantly homeomorphic
  to~$X_\Sigma$.
  In general, $Y_\Sigma$ is a
  $\TC$-equivariant strong deformation retract of~$X_\Sigma$.
  % The homotopy~$X_\Sigma(k)\times[0,1]\to Y_\Sigma(k)$
  % can be chosen to preserve orbit types.
\end{theorem}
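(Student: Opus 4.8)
The plan is to reduce the theorem to a comparison of orbit spaces by putting both $X_\Sigma$ and $Y_\Sigma$ in ``polar form''. As a preliminary one establishes the polar description of a toric variety: the non-negative real part $X_\Sigma(\R_{\ge 0})\subseteq X_\Sigma$ is a closed section of the $\TC$-action (so that the quotient map restricts to a homeomorphism $X_\Sigma(\R_{\ge 0})\cong X_\Sigma/\TC$), and, writing the affine charts as $U_\sigma=\Hom(\sigma^\vee\cap M,\C)$ ($M$ the weight lattice) and splitting each factor $\C$ into modulus and argument, one obtains a $\TC$-equivariant homeomorphism
\[
  X_\Sigma\;\cong\;\bigl(\TC\times X_\Sigma(\R_{\ge 0})\bigr)\big/{\sim},
  \qquad (t_1,x)\sim(t_2,x)\iff t_1 t_2^{-1}\in\TC_\sigma\ \text{ whenever } x\in\O_\sigma ,
\]
where $\O_\sigma$ is the orbit of $\sigma$ and $\TC_\sigma$ its compact isotropy group (with Lie algebra $\lin\sigma$). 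By construction $Y_\Sigma$ is already of this form, with $|\FF(\Sigma)|$ in place of $X_\Sigma(\R_{\ge 0})$ and with isotropy group $\TC_{\sigma_0}$ over a point whose supporting simplex is the chain $\sigma_0<\dots<\sigma_p$. Hence the theorem will follow once we produce a continuous map $h\colon|\FF(\Sigma)|\to X_\Sigma(\R_{\ge 0})$ which, for every cone $\sigma$, maps the stratum $S_\sigma:=\{\,x:\text{the smallest vertex of the supporting simplex of $x$ is }\sigma\,\}$ onto $\O_\sigma\cap X_\Sigma(\R_{\ge 0})$: if such an $h$ is a homeomorphism, then $\mathrm{id}_{\TC}\times h$ descends to a $\TC$-equivariant homeomorphism $Y_\Sigma\cong X_\Sigma$; and if moreover $X_\Sigma(\R_{\ge 0})$ admits an orbit-preserving strong deformation retraction onto $h(|\FF(\Sigma)|)$, this lifts to the desired $\TC$-equivariant retraction of $X_\Sigma$ onto the corresponding copy of $Y_\Sigma$.

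To construct $h$, use that the logarithm identifies $\O_\sigma\cap X_\Sigma(\R_{\ge 0})=\O_\sigma(\R_{>0})$ canonically with $\t/\lin\sigma$ (the distinguished point $x_\sigma$ going to $0$), and that $S_\sigma$ is homeomorphic, via the barycentric realization of $\FF(\Sigma)$, to the support $C_\sigma$ of the star of $\sigma$ --- the union of the cones $(\tau+\lin\sigma)/\lin\sigma$ over $\tau\in\Sigma$ with $\tau\supseteq\sigma$, sitting in $\t/\lin\sigma$ --- by a homeomorphism carrying the vertex $[\sigma]$ to $0$. Define $h$ on $S_\sigma$ to be this identification followed by the inclusion $C_\sigma\hookrightarrow\t/\lin\sigma\cong\O_\sigma(\R_{>0})$, the sign conventions being forced by the one-parameter-subgroup limits $\lim_{s\to 0}\lambda^v(s)=x_\tau$ for $v\in\operatorname{relint}\tau$. (Equivalently one is exhibiting the evident $\TC$-CW structure on $X_\Sigma$: each chain $\sigma_0<\dots<\sigma_p$ supplies a characteristic map $\Delta^p\times\TC/\TC_{\sigma_0}\to\overline{\O_{\sigma_0}}\subseteq X_\Sigma$, and these should assemble into a $\TC$-CW complex $\TC$-isomorphic to the tautological one on $Y_\Sigma$.) The delicate point --- and, I expect, the main obstacle of the proof --- is that these stratum-wise (equivalently cell-wise) definitions must be shown to glue to a \emph{continuous} map: when a barycentric coordinate of a point of $S_\sigma$ tends to $0$, the point degenerates to a point of $S_\tau$ for some $\tau\supsetneq\sigma$, and one has to check that its image in $\O_\sigma(\R_{>0})\cong\t/\lin\sigma$ runs off to infinity in the direction $\operatorname{relint}\bar\tau$ (the image of $\tau$) while converging, in the ambient $X_\Sigma$, to the prescribed point of $\O_\tau(\R_{>0})$. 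This compatibility is governed by the linear projections $\t/\lin\sigma\to\t/\lin\tau$ together with the explicit form, inside the chart $U_\tau$, of the degeneration $\O_\tau\subseteq\overline{\O_\sigma}$; it is what forces one particular monotone reparametrization $[0,1)\to[0,\infty)$ to be used uniformly along the radial directions, and it is where essentially all of the toric-geometric bookkeeping lies.

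Once $h$ is known to be continuous, the two cases are short. If $\Sigma$ is complete, the star of every cone is again complete, so $C_\sigma=\t/\lin\sigma$ for all $\sigma$ and $h$ is onto; and a complete fan is finite, so $|\FF(\Sigma)|$ is compact and the continuous bijection $h$ from a compact space to a Hausdorff space is automatically a homeomorphism. For an arbitrary fan, $h(|\FF(\Sigma)|)$ meets each $\O_\sigma(\R_{>0})\cong\t/\lin\sigma$ exactly in $C_\sigma$, which is a finite union of cones through the origin, hence a polyhedron and an absolute retract; so $\t/\lin\sigma$ strong-deformation-retracts onto $C_\sigma$, and --- by the same degeneration analysis as above, or by a downward induction on $\dim\O_\sigma$ extending the retraction across successive orbit-closure strata --- these retractions can be fitted together into a strong deformation retraction of $X_\Sigma(\R_{\ge 0})$ onto $h(|\FF(\Sigma)|)$ each stage of which maps every $\O_\sigma$ into itself. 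Being orbit-preserving, this homotopy is compatible with the relation~$\sim$, so it lifts to a $\TC$-equivariant strong deformation retraction of $X_\Sigma$ onto $Y_\Sigma$, completing the proof. Fixing a $\TC$-invariant inner product on $\t$ at the outset makes the barycentric and radial constructions above canonical and automatically compatible with all the quotients $\t\to\t/\lin\sigma$, which is what keeps the choices involved from spoiling the gluing.
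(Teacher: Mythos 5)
Your outline follows the same overall strategy as the paper (polar decomposition onto the non-negative part, stratum-wise comparison of $|\FF(\Sigma)|$ with $X_\Sigma(\R_+)$ via the stars of cones, compactness in the complete case, a deformation retraction in general), but it has a genuine gap exactly where you say you expect ``the main obstacle'' to lie: the continuity of the glued map $h$ is asserted as the thing to be checked and never checked. Since every other step is routine, this deferred verification \emph{is} the theorem, and defining $h$ stratum by stratum through the logarithms $\O_\sigma(\R_{>0})\cong\t/\lin\sigma$ makes it genuinely awkward, because as a point degenerates into a deeper orbit its log-coordinates run off to infinity and one has to control this in every chart. The paper avoids the problem by parametrizing \emph{closed} cells rather than open strata: for a chain $\alpha=(\sigma_0<\dots<\sigma_p)$ it sets $\phi_\alpha(t)=\lambda_{\sigma_p}(t_p)\cdots\lambda_{\sigma_1}(t_1)\,x_{\sigma_0}$ on $[0,1]^p$, with $\lambda_{\sigma_i}$ the one-parameter subgroup through an interior point $v_{\sigma_i}$ of $\sigma_i$; in the chart $X_{\sigma_p}(\R_+)$ each coordinate is the monomial $\prod_i t_i^{\langle m,v_{\sigma_i}\rangle}x_{\sigma_0}(m)$ with non-negative exponents (all $v_{\sigma_i}$ lie in $\sigma_p$), so continuity -- including at $t_i=0$, where the image jumps orbit -- is immediate, and the log/exp picture is used only afterwards to prove bijectivity (the interiors of the cells $B(\alpha)$ partition $X_\Sigma(\R_+)$, via the barycentric-subdivision fan of each star). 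If you keep your formulation, you must supply an argument of this kind; note also that your preliminary polar decomposition $X_\Sigma\cong(\TC\times X_\Sigma(\R_{\ge0}))/\!\sim$ for non-compact $X_\Sigma$ needs a reason why the continuous bijection is a homeomorphism (e.g.\ properness of the action map), whereas the paper only invokes it for complete $\Sigma$, where compactness of the source settles it.

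The same unproved degeneration analysis resurfaces in your treatment of the non-complete case: assembling the retractions of $\t/\lin\sigma$ onto the star supports ``by downward induction across orbit-closure strata'' is again a stratum-wise construction whose continuity (now of a homotopy) at points approaching smaller orbits is precisely what is in question, and extending a strong deformation retraction across successive strata is not automatic. The paper's route is both shorter and safer: embed $\Sigma$ in a complete fan $\tilde\Sigma$, observe that $\FF(\Sigma)$ and the subcomplex $L$ spanned by the cones not in $\Sigma$ are full subcomplexes of $\FF(\tilde\Sigma)$ on complementary vertex sets, so that $|\FF(\Sigma)|$ is a strong deformation retract of $|\FF(\tilde\Sigma)|\setminus|L|$ by a standard simplicial argument, and check that this retraction is compatible with the identifications defining $Y_{\tilde\Sigma}$, hence descends to the desired $\TC$-equivariant strong deformation retraction of $X_\Sigma$ onto $Y_\Sigma$. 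I would either adopt that completion argument or carry out your inductive extension in full detail; as written, both halves of your proposal rest on the unestablished continuity statement.
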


A basic tool to study transformation groups are equivariant CW~complexes.
% (\cf~\cite[Sec.~1.1]{AlldayPuppe:93} for a definition).
Since $Y_\Sigma$ is a finite $\TC$-CW complex by construction
(with $\TC$-cells~$(\TC/\TC_{\sigma_0})\times|\alpha|$
in the notation used above), we can immediately conclude:

\begin{corollary}\label{T-CW-complex-C}
  If $\Sigma$ is complete, then the toric variety~$X_\Sigma$
  is a (necessarily finite) $\TC$-CW complex. In general,
  $X_\Sigma$ has the equivariant homotopy type of a
  finite $\TC$-CW complex.
\end{corollary}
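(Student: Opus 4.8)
The plan is to derive both assertions directly from Theorem~\ref{main-result-C}, once the parenthetical claim that $Y_\Sigma$ is a finite $\TC$-CW complex has been verified. Granting this, the complete case is immediate, since a $\TC$-homeomorphism carries one $\TC$-CW structure to another; and the resulting complex is necessarily finite here, either because $Y_\Sigma$ manifestly is, or because $X_\Sigma$ is compact in the complete case and compact CW complexes are finite. In general, a $\TC$-equivariant strong deformation retraction is in particular a $\TC$-homotopy equivalence, so $X_\Sigma$ has the $\TC$-homotopy type of the finite $\TC$-CW complex $Y_\Sigma$.

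So the real task is to put a finite $\TC$-CW structure on $Y_\Sigma$. Using the notation of~\eqref{topology-complete-C}, let $\alpha=(\sigma_0,\dots,\sigma_p)$ be a $p$-simplex of $\FF(\Sigma)$, with $\sigma_0$ its smallest member, and write $\TC_\alpha=\TC_{\sigma_0}$. Because the identification in~\eqref{topology-complete-C} depends only on the supporting simplex of~$x$, it is constant on the open simplex $\alpha^{\circ}\subseteq|\FF(\Sigma)|$, so the image of $\TC\times\alpha^{\circ}$ in $Y_\Sigma$ is an open $\TC$-cell $(\TC/\TC_\alpha)\times\alpha^{\circ}$ of dimension~$p$. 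These cells are pairwise disjoint, cover $Y_\Sigma$, and are finite in number since $\Sigma$ has finitely many cones. For the characteristic maps, compose the canonical homeomorphism $\Delta^p\to|\alpha|\subseteq|\FF(\Sigma)|$ with the quotient map, obtaining a $\TC$-map $\TC\times\Delta^p\to Y_\Sigma$; on the facet of $\Delta^p$ corresponding to the face $\beta$ of $\alpha$ obtained by deleting one cone, this map factors through $(\TC/\TC_\beta)\times|\beta|$, where $\TC_\beta$ is the subtorus attached to the smallest member $\sigma'$ of~$\beta$. The crucial point is that $\TC_\alpha\subseteq\TC_\beta$: since $\sigma'$ contains $\sigma_0$, one has $\lin\sigma_0\subseteq\lin\sigma'$ and hence $\TC_{\sigma_0}\subseteq\TC_{\sigma'}$. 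Therefore the $\TC$-map above descends to a continuous $\TC$-map $(\TC/\TC_\alpha)\times\Delta^p\to Y_\Sigma$ restricting to a homeomorphism from $(\TC/\TC_\alpha)\times(\Delta^p)^{\circ}$ onto the open $\alpha$-cell and carrying $(\TC/\TC_\alpha)\times\partial\Delta^p$ into the union of the $\TC$-cells indexed by the proper faces of~$\alpha$, all of which have dimension less than~$p$. Filtering by the dimension of the simplex $\alpha$ then exhibits $Y_\Sigma$ as a finite $\TC$-CW complex with $\TC$-cells $(\TC/\TC_{\sigma_0})\times|\alpha|$, as asserted.

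I expect no serious obstacle: the corollary is bookkeeping resting on Theorem~\ref{main-result-C}. The only step demanding care is the one above --- checking that the maps $(\TC/\TC_\alpha)\times\Delta^p\to Y_\Sigma$ are well defined, continuous, and send the boundary into the lower skeleton --- which reduces to the inclusion $\TC_\alpha\subseteq\TC_\beta$ for faces $\beta$ of $\alpha$ in the order complex, together with the local constancy of the relation in~\eqref{topology-complete-C} on open simplices. With that settled, invoking Theorem~\ref{main-result-C} closes both cases.
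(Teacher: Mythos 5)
Your proposal is correct and follows the same route as the paper: the paper simply observes that $Y_\Sigma$ is by construction a finite $\TC$-CW complex with $\TC$-cells $(\TC/\TC_{\sigma_0})\times|\alpha|$ and then invokes Theorem~\ref{main-result-C}, exactly as you do. You merely spell out the cell structure and attaching maps (in particular the inclusion $\TC_{\sigma_0}\subseteq\TC_{\sigma'}$ for faces), which the paper leaves implicit.
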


These results do not only apply to complex, but also to real
toric varieties and to their non-negative parts. 
In Section~\ref{toric-varieties-general} we introduce the
notation necessary to formulate these generalizations;
the proof of the topological description then appears
in Section~\ref{proof-main-result}.
Some remarks about cubical subdivisions are made
in Section~\ref{cubical}.

\medskip

The second part of this paper studies 
the ordinary and equivariant singular cohomology
of toric varieties.
We use Corollary~\ref{T-CW-complex-C} to generalize a result
of Bahri--Franz--Ray \cite[Prop.~2.2]{BahriFranzRay:09} from the projective
to the general case, in particular to non-compact toric varieties.
This was announced in~\cite[Remark~2]{BahriFranzRay:09}.

\begin{theorem}\label{no-odd-degree}
  Let $X_\Sigma$ be a toric variety.
  If $H^*(X_\Sigma;\Z)$ vanishes in odd degrees,
  then
  $H_\TC^*(X_\Sigma;\Z)$
  % the $\TC$-equivariant cohomology of~$X_\Sigma$
  is isomorphic,
  as algebra over the polynomial ring~$H^*(B\TC;\Z)$,
  to $\PP(\Sigma;\Z)$,
  the ring of integral piecewise polynomials on~$\Sigma$.
\end{theorem}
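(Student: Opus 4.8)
The plan is to produce a natural morphism of graded $H^*(B\TC;\Z)$-algebras $\Psi\colon H_\TC^*(X_\Sigma;\Z)\to\PP(\Sigma;\Z)$ and then to show it is bijective. For each cone $\sigma\in\Sigma$ let $U_\sigma\subseteq X_\Sigma$ be the corresponding $\TA$-invariant affine chart and $\O_\sigma\subseteq U_\sigma$ its closed orbit; the inclusion $\O_\sigma\hookrightarrow U_\sigma$ is a $\TC$-equivariant deformation retraction (the affine, and easy, case of the machinery behind Theorem~\ref{main-result-C}), and $H_\TC^*(\O_\sigma;\Z)=H^*(B\TC_\sigma;\Z)$ may be identified with the ring $\Z[\sigma]$ of integral polynomial functions on $\lin\sigma$, concentrated in even degrees. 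Since $\PP(\Sigma;\Z)=\varprojlim_{\sigma\in\Sigma}\Z[\sigma]$ along the restriction maps, the composites $H_\TC^*(X_\Sigma;\Z)\to H_\TC^*(U_\sigma;\Z)\cong\Z[\sigma]$ assemble into $\Psi$; equivalently, $\Psi$ is the edge homomorphism onto $E_2^{0,*}$ of the Mayer--Vietoris (\v Cech) spectral sequence
\[
  E_2^{p,q}=\check H^p\bigl(\{U_\sigma\}_{\sigma\in\Sigmamax};\ \sigma\mapsto H_\TC^q(U_\sigma;\Z)\bigr)\ \Longrightarrow\ H_\TC^{p+q}(X_\Sigma;\Z),
\]
which, being an edge map of a multiplicative spectral sequence of $H^*(B\TC;\Z)$-algebras, is an algebra homomorphism over $H^*(B\TC;\Z)$. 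As each $E_1^{p,q}$ is a sum of groups $H_\TC^q(U_\sigma;\Z)=\Z[\sigma]$, it vanishes for $q$ odd, so $E_2^{p,q}=0$ for $q$ odd, and $E_2^{0,*}=\PP(\Sigma;\Z)$ is a finitely generated free graded abelian group.

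The hypothesis enters through equivariant formality. Since $H^{\rm odd}(X_\Sigma;\Z)=0$ and $H^*(B\TC;\Z)$ is torsion-free and even, the Serre spectral sequence of $(X_\Sigma)_\TC\to B\TC$ has $E_2$-term $H^*(B\TC;\Z)\otimes H^*(X_\Sigma;\Z)$, supported in bidegrees $(p,q)$ with $p,q$ both even; it is multiplicative, its bottom row $E_2^{*,0}=H^*(B\TC;\Z)$ consists of permanent cycles, and $d_r$ sends the generating column $E_r^{0,*}$ into a subquotient of $E_2^{r,*}=H^r(B\TC;\Z)\otimes H^*(X_\Sigma;\Z)$, which vanishes for all $r\ge2$ (for $r$ even because the target fibre degree is odd, for $r$ odd because $H^r(B\TC;\Z)=0$). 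Hence the spectral sequence collapses, $H_\TC^*(X_\Sigma;\Z)$ is concentrated in even degrees, and its associated graded is $H^*(B\TC;\Z)\otimes H^*(X_\Sigma;\Z)$; in particular $H_\TC^*(X_\Sigma;\Z)$ is torsion-free as soon as $H^*(X_\Sigma;\Z)$ is. Combining this with the classical rational identification $H_\TC^*(X_\Sigma;\Q)\cong\PP(\Sigma;\Q)$ (Bifet--De Concini--Procesi, Brion), which one checks is realised by $\Psi\otimes\Q$, shows that $\Psi$ has finitely generated torsion-free source and target of equal rank in each degree.

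It remains to promote this to a bijection, and here lie the two real difficulties. The first is torsion-freeness of $H^*(X_\Sigma;\Z)$; for a toric variety this is actually equivalent to the vanishing of odd cohomology, a fact about the orbit stratification which I would establish (or cite) separately, after which $\Psi$ is injective with finite-index image in each degree. The second, and the main obstacle, is to upgrade ``finite index'' to ``onto'', equivalently to prove that the integral \v Cech spectral sequence above has no higher differentials and satisfies $\check H^p(\{U_\sigma\};\sigma\mapsto H_\TC^q(U_\sigma;\Z))=0$ for $p>0$. Over $\Q$ both facts are part of the classical result, so over $\Z$ one is only fighting torsion in these groups; I would handle this by reducing modulo each prime $\ell$, using the collapse above to identify $H_\TC^*(X_\Sigma;\Z)\otimes\F_\ell$ with $H_\TC^*(X_\Sigma;\F_\ell)$, noting that $H^{\rm odd}(X_\Sigma;\F_\ell)=0$ too (again by torsion-freeness), running the entire argument over $\F_\ell$ with the local rings $H_\TC^*(U_\sigma;\F_\ell)=\F_\ell[\sigma]$, and comparing the resulting $E_\infty^{0,*}$ with $\PP(\Sigma;\Z)\otimes\F_\ell$. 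Establishing the requisite $\F_\ell$-acyclicity of this \v Cech complex and its compatibility with the integral one is the technical core; it cannot be reduced to an induction on the fan, since neither the vanishing of odd cohomology nor torsion-freeness nor \v Cech acyclicity is inherited by proper subfans, so the global cohomological hypothesis must be brought to bear at every stage.
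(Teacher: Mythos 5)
Your proposal sets up the right comparison map, but as written it is a programme rather than a proof: both of the steps you yourself flag as ``the real difficulties'' are exactly the places where the content of Theorem~\ref{no-odd-degree} lies, and neither is carried out. First, your route leans on the claim that for a toric variety the vanishing of $\Hodd(X_\Sigma;\Z)$ already forces $H^*(X_\Sigma;\Z)$ to be torsion-free. That is not established anywhere in your argument, and it is stronger than what the paper proves: Proposition~\ref{no-torsion-smooth-compact} obtains torsion-freeness only under the additional hypothesis that $X_\Sigma$ is smooth or compact, and its proof genuinely uses compactness (freeness of the terms $H_\TC^*(X_i,X_{i-1};\Z)$) or smoothness (Lefschetz duality against a compactification). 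For a general singular, non-compact $X_\Sigma$ this implication is not available, so the reduction mod~$\ell$ (identifying $H_\TC^*(X_\Sigma;\Z)\otimes\F_\ell$ with $H_\TC^*(X_\Sigma;\F_\ell)$ and getting $\Hodd(X_\Sigma;\F_\ell)=0$) has no foundation. Notably, the paper's proof of Theorem~\ref{no-odd-degree} never needs integral torsion-freeness of $H^*(X_\Sigma;\Z)$ at all, so building the argument on it is both unsupported and avoidable.

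Second, the surjectivity of $\Psi$ --- equivalently the vanishing of the higher \v{C}ech cohomology of your covering with coefficients in $\sigma\mapsto H_\TC^q(X_\sigma;\Z)$, integrally or mod every prime --- is precisely the hard point, and you defer it as ``the technical core'' without an argument; an injection of finite index in each degree between torsion-free modules of equal rank is of course not yet an isomorphism. The paper supplies the missing input from a different direction: since toric varieties have connected isotropy groups and, by Corollary~\ref{T-CW-complex-C}, the equivariant homotopy type of a finite $\TC$-CW complex, the Franz--Puppe theorem gives exactness of the \emph{integral} Atiyah--Bredon sequence~\eqref{atiyah-bredon} under condition~\eqref{criterion-c} of Lemma~\ref{cef-conditions}. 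Exactness at the first two spots identifies $H_\TC^*(X_\Sigma;\Z)$ with $\ker\delta^0$ (injectivity and surjectivity in one stroke), and a separate combinatorial step, Proposition~\ref{maximal-full-dim} (all maximal cones are full-dimensional, itself proved from the exact sequence), is what allows $\ker\delta^0$ to be matched with the kernel of the Mayer--Vietoris map~\eqref{mayer-vietoris}, i.e.\ with $\PP(\Sigma;\Z)$; your outline also omits this point. Unless you import an integral Chang--Skjelbred-type exactness statement of this kind (or prove the required \v{C}ech acyclicity directly), the argument does not close; citing the rational identification of Bifet--De Concini--Procesi and Brion does not help integrally, and that identification itself already presupposes rational equivariant formality, so it cannot substitute for the integral step.
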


The proof appears in Section~\ref{proof-no-odd-degree},
together with the precise definition of piecewise polynomials.
Combining Theorem~\ref{no-odd-degree} with a result of Payne~\cite{Payne:06},
we get:

\begin{corollary}
  Let $X_\Sigma$ be a toric variety.
  If $H^*(X_\Sigma;\Z)$ vanishes in odd degrees,
  then $H_T^*(X_\Sigma;\Z)$ is isomorphic to~$A_T^*(X_\Sigma)$,
  the equivariant Chow cohomology ring of~$X_\Sigma$.
\end{corollary}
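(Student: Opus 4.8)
The plan is to identify both $H_\TC^*(X_\Sigma;\Z)$ and $A_\TC^*(X_\Sigma)$ with the ring $\PP(\Sigma;\Z)$ of integral piecewise polynomials on~$\Sigma$ and then to compose the resulting isomorphisms. For the topological side this is exactly Theorem~\ref{no-odd-degree}: the hypothesis that $H^*(X_\Sigma;\Z)$ be concentrated in even degrees yields an isomorphism of $H^*(B\TC;\Z)$-algebras $H_\TC^*(X_\Sigma;\Z)\cong\PP(\Sigma;\Z)$. For the Chow side I would invoke Payne's theorem \cite{Payne:06}, which holds for an arbitrary toric variety (with no completeness or simpliciality hypothesis) and with integral coefficients, and which asserts that the equivariant Chow cohomology ring $A_\TC^*(X_\Sigma)$ is isomorphic to $\PP(\Sigma;\Z)$ as an algebra over the equivariant Chow ring $A_\TC^*(\mathrm{pt})$ of a point. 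Payne's statement is unconditional; the vanishing in odd degrees is needed only to make the topological side accessible via Theorem~\ref{no-odd-degree}.

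Next I would line up the two coefficient rings. Writing $M$ for the weight lattice of~$\TA$, both $H^*(B\TC;\Z)$ and $A_\TC^*(\mathrm{pt})$ are canonically the polynomial ring $\Sym_\Z M$, the former with $M$ placed in cohomological degree~$2$ and the latter with $M$ placed in Chow-theoretic degree~$1$; the equivariant cycle class map induces precisely this identification after doubling degrees. Under it, the description of $\PP(\Sigma;\Z)$ used in Section~\ref{proof-no-odd-degree} and the one appearing in Payne's theorem are literally the same ring --- same lattice, same restriction maps to the maximal cones --- so composing the two isomorphisms gives a ring isomorphism $H_\TC^{2k}(X_\Sigma;\Z)\cong A_\TC^{k}(X_\Sigma)$ for all~$k$, while $H_\TC^{\mathrm{odd}}(X_\Sigma;\Z)$ vanishes because $\PP(\Sigma;\Z)$ contributes only in even topological degrees. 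This is the assertion of the corollary.

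The one place that genuinely needs attention --- and hence the main obstacle --- is bookkeeping rather than mathematics: one must check that $\PP(\Sigma;\Z)$ as defined in Section~\ref{proof-no-odd-degree} coincides on the nose with Payne's piecewise-polynomial ring, i.e.\ that the lattice conventions and grading normalizations match and that no dualizing or sign twist has slipped in. Once this compatibility is confirmed the corollary is immediate. If one wanted the isomorphism to be realized by the equivariant cycle class map $A_\TC^*(X_\Sigma)\to H_\TC^*(X_\Sigma;\Z)$ itself, rather than as an abstract isomorphism, one would additionally remark that both Payne's isomorphism and the one of Theorem~\ref{no-odd-degree} are compatible with this map; but this refinement is not required for the statement as phrased.
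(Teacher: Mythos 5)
Your argument is exactly the paper's: the corollary is obtained by combining Theorem~\ref{no-odd-degree} (identifying $H_\TC^*(X_\Sigma;\Z)$ with $\PP(\Sigma;\Z)$ under the evenness hypothesis) with Payne's unconditional identification of $A_\TC^*(X_\Sigma)$ with the same ring of integral piecewise polynomials. Your extra care about matching lattice and grading conventions is a reasonable sanity check but does not change the route.
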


We finally investigate torsion phenomena
in the integral cohomology. The celebrated Jurkiewicz--Danilov theorem
implies that no torsion appears if $X_\Sigma$ is smooth
and compact. In Section~\ref{torsion-free} this is extended as follows:

\begin{proposition}\label{no-torsion-smooth-compact}
  Assume that % the toric variety~
  $X_\Sigma$ is smooth or compact.
  If $H^*(X_\Sigma;\Z)$ is concentrated in even degrees,
  then it is torsion-free.
  % Moreover, $H_\TC^*(X_\Sigma;\Z)$ is free over~$H^*(B\TC;\Z)$ in this case.
\end{proposition}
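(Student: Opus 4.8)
The plan is to reduce the statement, via Theorem~\ref{no-odd-degree}, to an algebraic property of the ring of piecewise polynomials. Since $H^*(X_\Sigma;\Z)$ is concentrated in even degrees, Theorem~\ref{no-odd-degree} gives an isomorphism of $H^*(B\TC;\Z)=\Sym(\t^*_\Z)$-algebras $H^*_\TC(X_\Sigma;\Z)\cong\PP(\Sigma;\Z)$. First I would record that $\PP(\Sigma;\Z)$ is torsion-free as an abelian group: every cone of $\Sigma$ is a face of a maximal cone and a piecewise polynomial is determined by its restrictions to the maximal cones, so restriction embeds $\PP(\Sigma;\Z)$ into a finite product of polynomial rings over $\Z$. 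Next I would bring in the Borel fibration $X_\Sigma\hookrightarrow(X_\Sigma)_\TC\to B\TC$. Because $H^*(B\TC;\Z)$ and $H^*(X_\Sigma;\Z)$ are both concentrated in even degrees, its integral Serre spectral sequence collapses at $E_2$; hence the restriction to the fibre $\kappa\colon H^*_\TC(X_\Sigma;\Z)\to H^*(X_\Sigma;\Z)$ is surjective, and since $\kappa$ annihilates $\t^*_\Z\subset H^2(B\TC;\Z)$ it factors as a surjection $\bar\kappa\colon\PP(\Sigma;\Z)\otimes_{\Sym(\t^*_\Z)}\Z\twoheadrightarrow H^*(X_\Sigma;\Z)$. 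Rationally $\bar\kappa$ is an isomorphism (rational equivariant formality, again from the collapse), so both sides have the same rank; thus $\ker\bar\kappa$ is a torsion group, and it suffices to prove that $\PP(\Sigma;\Z)\otimes_{\Sym(\t^*_\Z)}\Z$ is torsion-free, for then $\ker\bar\kappa=0$ and $H^*(X_\Sigma;\Z)$ inherits torsion-freeness from it.

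Equivalently, since $\PP(\Sigma;\Z)$ is already $\Z$-torsion-free, one must show that a lattice basis $u_1,\dots,u_n$ of $\t^*_\Z$, viewed in $\PP(\Sigma;\Z)$, together with each prime $p$ forms a regular sequence --- in other words, that $\PP(\Sigma;\Z)$ is a free module over $\Sym(\t^*_\Z)$ (being finitely generated and graded, projectivity would already suffice). This is the point at which the hypothesis on $X_\Sigma$ is used. If $X_\Sigma$ is compact, $\Sigma$ is complete, and the freeness of $\PP(\Sigma;\Z)$ over $\Sym(\t^*_\Z)$ is known (Billera and Brion over $\Q$; over $\Z$ one may invoke Payne's work on equivariant Chow cohomology, or reprove it by an induction over a filtration of $\Sigma$ by stars of cones, using that $\PP$ restricts well and that the star of a cone in a complete fan is again complete enough). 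If $X_\Sigma$ is nonsingular, $\Sigma$ is a smooth fan, $\PP(\Sigma;\Z)$ is the face ring of the underlying simplicial fan (generated over $\Z$ by the Courant functions, which are integral because the fan is unimodular), and freeness over $\Sym(\t^*_\Z)$ is the assertion that this simplicial complex is Cohen--Macaulay over $\Z$; here I would feed back the hypothesis that $H^*(X_\Sigma;\Z)$ has no odd-degree part, which through Reisner's criterion constrains the reduced homology of all links. As a fallback in the smooth case one can instead embed $\Sigma$ as a subfan of a complete smooth fan $\tilde\Sigma$, realise $X_\Sigma$ as an open subvariety of the smooth projective toric variety $X_{\tilde\Sigma}$ whose complement is a union of smooth orbit closures, and run the even-degree hypothesis through the resulting Gysin sequences, starting from the Jurkiewicz--Danilov torsion-freeness of $X_{\tilde\Sigma}$ and of its orbit closures.

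The main obstacle is exactly this second step: establishing that $\PP(\Sigma;\Z)$ is $\Sym(\t^*_\Z)$-free (equivalently, that $\PP(\Sigma;\Z)\otimes_{\Sym(\t^*_\Z)}\Z$ remains torsion-free) for complete or for smooth fans. For complete fans it is the integral sharpening of a classical freeness theorem and needs either a precise reference or a careful direct proof; for smooth but non-complete fans it is essentially a Cohen--Macaulayness statement that has to be teased out of the even-degree hypothesis, and the delicate part is doing so without circularity --- that is, without implicitly assuming the torsion-freeness of $H^*(X_\Sigma;\Z)$ that is the goal.
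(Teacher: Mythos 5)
Your opening reduction is fine: granting Theorem~\ref{no-odd-degree}, the map $\bar\kappa\colon\PP(\Sigma;\Z)\otimes_{\Sym(\t^*_\Z)}\Z\to H^*(X_\Sigma;\Z)$ is surjective with torsion kernel, so everything hinges on showing that $\PP(\Sigma;\Z)$ is free over $H^*(B\TC;\Z)$ (or at least that its fibrewise reduction is torsion-free). But that key step is exactly where the proposal breaks down, and in two ways. First, the freeness you want to invoke is not in the literature you cite: Billera--Rose and Brion work rationally and, where freeness is concerned, with simplicial (complete) fans, and Payne's theorem identifies $\PP(\Sigma;\Z)$ with equivariant Chow cohomology but contains no freeness assertion; there is no known statement that $\PP(\Sigma;\Z)$ is $\Sym(\t^*_\Z)$-free for an arbitrary complete fan, and without a cohomological hypothesis such freeness fails in general. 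Second, and more seriously, under the standing hypothesis $\Hodd(X_\Sigma;\Z)=0$ the paper's Lemma~\ref{equivalence-freeness} together with Theorem~\ref{no-odd-degree} shows that freeness of $\PP(\Sigma;\Z)\cong H_\TC^*(X_\Sigma;\Z)$ over $H^*(B\TC;\Z)$ is \emph{equivalent} to the torsion-freeness of $H^*(X_\Sigma;\Z)$ you are trying to prove. So unless you produce an independent, purely fan-theoretic proof of that freeness (which you do not -- the ``induction over stars'' in the complete case and the appeal to Reisner's criterion in the smooth case are exactly the unproved content, and the paper's Proposition~\ref{free-link} on links is a \emph{consequence} of the present proposition, not an available ingredient), the argument is circular, as your own closing sentence half-concedes. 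The Gysin-sequence fallback in the smooth case is also not a proof as stated: the complement of $X_\Sigma$ in a smooth toric compactification is a union of intersecting orbit closures, so no single Gysin sequence applies, and it is not explained how the evenness of $H^*(X_\Sigma;\Z)$ enters.

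For comparison, the paper avoids piecewise polynomials entirely at this point. In the compact case every term $H_\TC^{*+i}(X_i,X_{i-1};\Z)$ of the Atiyah--Bredon sequence~\eqref{atiyah-bredon} is a finite sum of polynomial rings $\Z[\sigma]$, hence $\Z$-free; the sequence is exact by \cite{FranzPuppe:07} because of Lemma~\ref{cef-conditions}, so it stays exact after reduction mod~$p$, exactness over $\F_p$ forces $H_\TC^*(X_\Sigma;\F_p)$ to be free over $H^*(B\TC;\F_p)$ and hence $H^*(X_\Sigma;\F_p)$ to be even for every prime $p$, and the Universal Coefficient Theorem then excludes integral torsion. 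In the smooth non-compact case the paper does not attempt to work on $X_\Sigma$ directly: Lefschetz duality identifies $H^*(\tilde X,Z;\Z)$ with $H_{2n-*}(X_\Sigma;\Z)$ for a toric compactification $\tilde X\supset X_\Sigma$ with $Z=\tilde X\setminus X_\Sigma$, and the compact argument is run for the pair $(\tilde X,Z)$. If you want to salvage your plan, you would have to supply a genuinely combinatorial proof that evenness of $H^*(X_\Sigma;\Z)$ forces $\PP(\Sigma;\Z)$ to be a free $\Sym(\t^*_\Z)$-module, with integer coefficients and for non-simplicial or non-complete fans as appropriate; as it stands, that is the proposition itself in disguise.
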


\begin{acknowledgments}
  This paper is a by-product of a joint project with Tony Bahri and Nigel Ray
  on toric orbifolds (see \cite{BahriFranzRay:09}, \cite{BahriFranzRay:three},
  \cite{BahriFranzRay:two}); it is a pleasure to acknowledge this
  fruitful collaboration.
  I would also like to thank Sam Payne for stimulating discussions,
  and again Nigel Ray for spotting a mistake in an earlier version
  of this paper and for other valuable comments.
\end{acknowledgments}

\section{Toric varieties defined over monoids}
\label{toric-varieties-general}

\noindent
In this section we brief\/ly recall how to define
toric varieties over submonoids of~$\C$, and then
state analogues of Theorem~\ref{main-result-C} and
Corollary~\ref{T-CW-complex-C} %~and~\ref{diagram-C}.
Standard references for toric varieties
are \cite{Oda:88}, \cite{Fulton:93} and~\cite{Ewald:96};
see in particular \cite[Sec.~1.3]{Oda:88} and~\cite[Sec.~4.1]{Fulton:93}
for real toric varieties and non-negative parts.

Let $N$ be a free $\Z$-module of rank~$n$ with dual~$M=N^\vee$.
Extensions to real scalars are written in the form~$N_\R=N\otimes\R$.
(Unless stated otherwise, tensor products are taken over~$\Z$.)

Let $k$ be a multiplicative submonoid of~$\C$ containing $0$~and~$1$.
We write $\TA(k)=\Hom(M,k)$
for the group of monoid homomorphisms~$M\to k$
(or, in this case equivalently, of group homomorphisms~$M\to k^\times$)
% We identify $\TA(\C)$ with~$\TA$
and set $\TC(k)=\TA(k\cap S^1)$.
Then $\TA(\C)$~and~$\TC(\C)$
are the algebraic torus~$\TA$ and the compact torus~$\TC$
introduced earlier, and
$\TC(\R)\cong(\Z_2)^n$ is the compact form of~$\TA(\R)\cong(\R^*)^n$.
Taking~$k=\R_+=[0,\infty)$,
we get $\TA(\R_+)\cong(0,\infty)^n$ and $\TC(\R_+)=1$.

For a rational cone~$\sigma\subset N_\R$ % =N\otimes_\Z\R$
with dual~$\sigma^\vee\subset M_\R$,
the affine toric variety~$X_\sigma(k)$ %, $\sigma\in\Sigma$
is defined as the set of monoid homomorphisms
\begin{equation}
  X_\sigma(k) = \Hom(\sigma^\vee\cap M,k).
\end{equation}
Since $\sigma^\vee\cap M$ is finitely generated, we may embed
$\Hom(\sigma^\vee\cap M,k)$ into some affine space~$k^L$, which induces a
topology on~$X_\sigma(k)$. (We always use the metric topology
on~$\C$, hence on~$X_\sigma(k)$.)
The ``tori''~$\TA(k)$~and~$\TC(k)$
act on~$X_\sigma(k)$ by pointwise multiplication of functions.

As in the introduction, $\Sigma$ denotes a rational fan
in~$N_\R$. %=N\otimes_\Z\R$.
The toric variety~$X_\Sigma(k)$ is obtained by gluing the
affine pieces~$X_\sigma(k)$ together as prescribed by the fan;
it is a Hausdorff space. % , and compact if $\Sigma$ is complete.
We write $x_\sigma\in X_\sigma(k)$ for the distinguished point
of~$X_\sigma(k)$,
\begin{equation}
  x_\sigma(m)=\begin{cases}
    1 & \hbox{if $m\in\sigma^\perp$} \\
    0 & \hbox{otherwise},
  \end{cases}
\end{equation}
and $\O_\sigma(k)$ for its orbit under~$\TA(k)$.
% (\cf~\cite[Sec.~2.1]{Fulton:93}.
% (The point~$x_\sigma$ is independent of~$k$ in the sense
% that it lies in~$X_\sigma(\{0,1\})\subset X_\sigma(k)$.)
These orbits partition $X_\Sigma(k)$.

Note that $X_\Sigma(\C)$ is the usual complex toric variety,
$X_\Sigma(\R)$ its real part and $X_\Sigma(\R_+)$ its
non-negative part (the `associated manifold with corners').
%, \cf~\cite[Sec.~4.1]{Fulton:93}.
As done in the introduction, we often write
% $\TA=\TA(\C)$, $\TC=\TC(\C)$,
$X_\Sigma=X_\Sigma(\C)$ and $\O_\sigma=\O_\sigma(\C)$.
We also use the notation~%
% $\TC_\sigma(k)=\{g\in T(k): g(\sigma\cap N)=1\}$
$\TC_\sigma(k)=\{\,g\in T(k): \sigma^\perp\cap M\subset\ker g\,\}$
for the subgroup of~$T(k)$ determined by~$\sigma\in\Sigma$.

Recall that the $p$-simplices of the order complex~$\FF(\Sigma)$ % of~$\Sigma$
are the strictly ascending
sequences~$\sigma_0<\dots<\sigma_p$ of length~$p+1$
in the partially ordered set~$\Sigma$.
(This is the same as the nerve of~$\Sigma$, considered as a category
with order relations as morphisms.)
In particular, vertices of~$\FF(\Sigma)$ correspond to cones in~$\Sigma$.
One may think of $\FF(\Sigma)$ as the cone over the barycentric subdivision
of the ``polyhedral complex'' obtained by intersecting
the unit sphere~$S^{n-1}\subset N_\R$ with~$\Sigma$.

In analogy with~\eqref{topology-complete-C},
we define the $\TC(k)$-space
\begin{equation}\label{topology-complete}
  Y_\Sigma(k)=
    \bigl(\TC(k)\times |\FF(\Sigma)|\bigr)\bigm/\mathord\sim\,,
\end{equation}
where the identification is done as follows:
For~$x\in|\FF(\Sigma)|$, 
say with supporting simplex~$\alpha=(\sigma_0,\dots,\sigma_p)$,
one has
$(t_1,x)\sim(t_2,x)$ iff
$t_1 t_2^{-1}\in \TC_{\sigma_0}(k)$.

We have the following generalizations of results
stated in the introduction, where $k$ denotes either
$\C$,~$\R$ or~$\R_+$.
The proof of Theorem~\ref{main-result} appears in the following section.

\begin{theorem}\label{main-result}
  If $\Sigma$ is complete, then $Y_\Sigma(k)$ is
  $\TC(k)$-equivariantly homeomorphic
  to~$X_\Sigma(k)$.
  In general, $Y_\Sigma(k)$ is a
  $\TC(k)$-equivariant strong deformation retract of~$X_\Sigma(k)$.
  % The homotopy~$X_\Sigma(k)\times[0,1]\to Y_\Sigma(k)$
  % can be chosen to preserve orbit types.
\end{theorem}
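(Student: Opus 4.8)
The plan is to build the homeomorphism affine piece by affine piece and then glue, exploiting the fact that over a single cone $\sigma$ the picture is completely explicit. First I would fix $\sigma\in\Sigma$ and analyze $X_\sigma(k)$. Using the polar decomposition in $\C$ (and its analogue $\R^\times = S^0\times(0,\infty)$, and the degenerate case $\R_+$), one writes $k^\times \cong (k\cap S^1)\times k_{>0}$ for $k=\C,\R,\R_+$, which yields a $\TC(k)$-equivariant decomposition $\TA(k)\cong \TC(k)\times\TA(\R_+)$. The key local model is the non-negative part $X_\sigma(\R_+)=\Hom(\sigma^\vee\cap M,\R_+)$: the moment-map-type argument of Jurkiewicz, or equivalently the explicit coordinates on $X_\sigma(\R_+)$, identifies it with the ``dual cone quotient,'' and one checks that $X_\sigma(k)$ is $\TC(k)$-equivariantly homeomorphic to $\bigl(\TC(k)\times X_\sigma(\R_+)\bigr)/\!\sim$, where $(t_1,x)\sim(t_2,x)$ when $t_1t_2^{-1}$ lies in the stabilizer of $x$, and that stabilizer is $\TC_\tau(k)$ where $\tau\preceq\sigma$ is the face with $x$ in the relative interior of the corresponding orbit-closure stratum. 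This reduces everything to producing a $\TC(k)$-equivariant homeomorphism between $X_\sigma(\R_+)$ and the corresponding piece $|\FF(\Sigma_{\le\sigma})|$ of the order complex (the cone on the barycentric subdivision of $\sigma$), compatible with the stratification by faces.

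Next I would construct that homeomorphism on each affine chart. On the order-complex side, the star of the vertex $\sigma$ in $\FF(\Sigma)$ restricted to faces of $\sigma$ is itself a cone, and barycentric coordinates on it are indexed by chains $\tau_0<\dots<\tau_p\le\sigma$; on the $X_\sigma(\R_+)$ side one has the standard ``$a^{\sigma^\vee}$'' coordinates. A concrete choice is to send a point of $X_\sigma(\R_+)$ lying in the stratum indexed by a face $\tau$ to a point of $|\FF|$ supported on a simplex whose minimal vertex is $\tau$, using a partition-of-unity / barycentric interpolation determined by the values of a fixed finite generating set of $\sigma^\vee\cap M$. One must check this map is a homeomorphism onto its image (properness, since both spaces are built from finitely many pieces and are locally compact), that it is compatible with the face stratification, and that it intertwines the stabilizer data, so that crossing with $\TC(k)$ and quotienting gives the desired equivariant homeomorphism $Y_\sigma(k)\approx X_\sigma(k)$.

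The gluing step comes next: the affine homeomorphisms must be made to agree on overlaps $X_{\sigma\cap\sigma'}(k)$. Since $\FF(\Sigma)$ is functorial in the poset $\Sigma$ and $X_{(-)}(k)$ is functorial in cones under the face relation, the honest way is to choose the local data (the generating sets, the interpolation scheme) functorially with respect to faces — e.g.\ work with the canonical coordinates coming from $\sigma^\vee\cap M$ itself rather than an arbitrary generating set — so that the charts are compatible on the nose and patch to a global $\TC(k)$-equivariant homeomorphism in the complete case. For the general (non-complete) case, $|\FF(\Sigma)|$ does not fill up $X_\Sigma(k)$; instead I would define the strong deformation retraction $X_\Sigma(k)\times[0,1]\to X_\Sigma(k)$ by pushing the $\TA(\R_+)$-factor radially inward, \ie\ scaling the positive real coordinates toward their limiting values, done compatibly across charts; this is $\TC(k)$-equivariant because it only touches the $\TA(\R_+)\cong\TA(k)/\TC(k)$ direction, and its image is exactly the subspace $Y_\Sigma(k)$.

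The main obstacle I anticipate is \emph{not} any single chart — each local model is elementary — but making the chart-level homeomorphisms genuinely compatible under restriction to subcones, so that they glue without a homotopy correction. Naively one picks a barycentric interpolation per cone and then has to reconcile choices on intersections; the clean fix is to phrase the local homeomorphism canonically in terms of $\sigma^\vee\cap M$ (and the induced face data) so that functoriality is automatic, but verifying that the resulting formula really is a homeomorphism — in particular continuity and openness at the deepest strata, where many coordinates vanish simultaneously — will require the bulk of the careful work. A secondary subtlety is the uniform treatment of $k=\C,\R,\R_+$: for $k=\R$ the compact ``torus'' is $(\Z_2)^n$ and for $k=\R_+$ it is trivial, so one must make sure the stabilizer subgroups $\TC_\tau(k)$ and the polar decomposition arguments are stated in a way that specializes correctly, which is mostly a matter of bookkeeping once the $\C$ case is in place.
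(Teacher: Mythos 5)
The decisive step of your reduction is false: $X_\sigma(\R_+)$ is \emph{not} homeomorphic to $|\FF(\Sigma_{\le\sigma})|$, the cone on the barycentric subdivision of $\sigma$ --- the latter is compact, while $X_\sigma(\R_+)$ contains the unbounded orbit $\TA(\R_+)\cong(0,\infty)^n$; correspondingly, the part of $Y_\Sigma(k)$ lying over $|\FF(\Sigma_{\le\sigma})|$ is compact, whereas the chart $X_\sigma(k)$ is not. The clearest symptom is that your chart-by-chart argument never uses completeness of $\Sigma$, so if it worked it would produce a $\TC(k)$-homeomorphism $Y_\Sigma(k)\approx X_\Sigma(k)$ for \emph{every} fan, which is wrong: for $\Sigma=\{0\}$ one has $X_\Sigma(\C)=\TA\cong(\C^*)^n$ but $Y_\Sigma(\C)=\TC$, and already the local claim reads ``$(0,\infty)^n\approx\mathrm{point}$''. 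What is true chart-wise is that the piece of $Y_\Sigma(k)$ over $|\FF(\Sigma_{\le\sigma})|$ is $X_\sigma(I)$ (for $k=\R_+$), resp.\ $X_\sigma(D^2)$ (for $k=\C$), the points of norm at most $1$; the actual content of the theorem is that for complete $\Sigma$ these pieces exhaust $X_\Sigma(k)$, equivalently that the simplices indexed by $\FF(\Sigma)$ cover all of $X_\Sigma(\R_+)$, and your proposal contains no argument for this. That covering statement is exactly where the paper works: it constructs, for each chain $\alpha=(\sigma_0,\dots,\sigma_p)$, a simplex $B(\alpha)\subset X_{\sigma_p}(\R_+)$ via the $1$-parameter semigroups through interior points $v_{\sigma_i}$, and then proves that the interiors of the $B(\alpha)$ partition $X_\Sigma(\R_+)$ by transporting, through the real-analytic isomorphism $\exp_\sigma\colon N(\sigma)_\R\to\O_\sigma(\R_+)$, the fact that the ``barycentric subdivision'' of the star of $\sigma$ is a \emph{complete} fan in $N(\sigma)_\R$ (this is where completeness enters). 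Your polar-decomposition step reducing $k=\C,\R$ to $\R_+$ is sound and is essentially the paper's Step 2, done there globally: $\TC(k)\times X_\Sigma(\R_+)\to X_\Sigma(k)$ descends to a continuous bijection from the compact space $Y_\Sigma(k)$ to the Hausdorff space $X_\Sigma(k)$.

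The non-complete case is also underdeveloped. ``Scaling the positive real coordinates toward their limiting values'' is not a construction: for non-complete $\Sigma$ the relevant limits need not exist in $X_\Sigma(k)$ (that is precisely the non-properness), and on the dense orbit the retraction has to take $\TA(\R_+)\cong N_\R$ onto the subset corresponding to $-|\Sigma|$, in general a non-convex union of cones, so no naive chart-wise radial scaling lands there; one must also check equivariance and preservation of orbit types, which the paper needs later. The paper instead embeds $\Sigma$ in a complete fan $\tilde\Sigma$, observes that $\FF(\Sigma)$ and the subcomplex $L$ spanned by the vertices not in $\Sigma$ are full subcomplexes of $\FF(\tilde\Sigma)$ on complementary vertex sets, and applies the standard simplicial fact that $|\FF(\Sigma)|$ is a strong deformation retract of $|\FF(\tilde\Sigma)|\setminus|L|$, transporting this through the homeomorphism already established for $\tilde\Sigma$. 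To salvage your outline you would need to (a) replace the chart-wise claim by the correct one about $X_\sigma(I)$ or $X_\sigma(D^2)$, (b) prove that these pieces cover $X_\Sigma(k)$ when $\Sigma$ is complete, and (c) give an honest construction of the equivariant strong deformation retraction, for instance by the full-subcomplex route.
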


Equivariant CW~complexes are defined
in~\cite[Sec.~1.1]{AlldayPuppe:93}, for instance.

\begin{corollary}\label{T-CW-complex}
  If $\Sigma$ is complete, then $X_\Sigma(k)$
  is a finite $\TC(k)$-CW complex. In general,
  $X_\Sigma(k)$ has the equivariant homotopy type of a
  finite $\TC(k)$-CW complex.
\end{corollary}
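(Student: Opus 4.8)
The plan is to derive both assertions from Theorem~\ref{main-result}, once we record that the space $Y_\Sigma(k)$ of~\eqref{topology-complete} is a finite $\TC(k)$-CW complex by its very construction. Granting this, the first assertion holds because a $\TC(k)$-space that is $\TC(k)$-equivariantly homeomorphic to a finite $\TC(k)$-CW complex inherits the cell structure, hence is one itself; and the second holds because a $\TC(k)$-equivariant strong deformation retract is in particular a $\TC(k)$-homotopy equivalence, so in the general case $X_\Sigma(k)$ has the $\TC(k)$-homotopy type of the finite $\TC(k)$-CW complex $Y_\Sigma(k)$.

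So the substance is to equip $Y_\Sigma(k)$ with an equivariant cell structure. Write $q\colon\TC(k)\times|\FF(\Sigma)|\to Y_\Sigma(k)$ for the quotient map, and for $p\ge 0$ let $Y^{(p)}$ be the image under $q$ of $\TC(k)\times|\FF(\Sigma)^{(p)}|$, using the skeletal filtration of the finite simplicial complex $\FF(\Sigma)$. I would show that $Y^{(p)}$ is obtained from $Y^{(p-1)}$ by attaching, for each $p$-simplex $\alpha=(\sigma_0<\dots<\sigma_p)$ of $\FF(\Sigma)$, one equivariant cell $\bigl(\TC(k)/\TC_{\sigma_0}(k)\bigr)\times D^{p}$. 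Identifying $|\alpha|$ with $D^{p}$, the characteristic map is the one induced by the restriction of $q$ to $\TC(k)\times|\alpha|$: over the relative interior of $|\alpha|$ the relation~$\sim$ identifies two points only when they differ by the left action of $\TC_{\sigma_0}(k)$, whereas over a proper face $|\beta|$ of $|\alpha|$ with smallest vertex $\sigma_{i_0}$ it identifies by the \emph{larger} subgroup $\TC_{\sigma_{i_0}}(k)\supseteq\TC_{\sigma_0}(k)$ --- the inclusion holding because $\sigma_0\le\sigma_{i_0}$ gives $\sigma_{i_0}^\perp\subseteq\sigma_0^\perp$. Hence $q$ descends to a $\TC(k)$-equivariant map $\bar q\colon\bigl(\TC(k)/\TC_{\sigma_0}(k)\bigr)\times D^{p}\to Y_\Sigma(k)$ that restricts to a homeomorphism from $\bigl(\TC(k)/\TC_{\sigma_0}(k)\bigr)\times\mathring D^{p}$ onto the part of $Y_\Sigma(k)$ lying over the open simplex, and that sends $\bigl(\TC(k)/\TC_{\sigma_0}(k)\bigr)\times S^{p-1}$ into $Y^{(p-1)}$; its restriction to the sphere is the desired (continuous, equivariant) attaching map. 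Since the images under $q$ of the open simplices of $\FF(\Sigma)$ (crossed with $\TC(k)$) partition $Y_\Sigma(k)$ and $\FF(\Sigma)$ is finite, this displays $Y_\Sigma(k)$ as a finite $\TC(k)$-CW complex with one cell per simplex of $\FF(\Sigma)$.

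The only delicate point is the point-set topology: that $\bar q$ genuinely restricts to the asserted homeomorphism onto the open cell, and that each $Y^{(p)}$ carries the pushout (quotient) topology rather than merely standing in bijection with it. Here the Hausdorffness of $X_\Sigma(k)$, hence of $Y_\Sigma(k)$ (Section~\ref{toric-varieties-general}), does the work: all domains in sight are compact, so continuous bijections onto Hausdorff spaces are homeomorphisms and quotient maps out of such domains are closed, and finiteness of $\FF(\Sigma)$ makes the weak-topology condition automatic. I expect this to be the main --- though entirely routine --- obstacle, whereas the combinatorial bookkeeping (in particular the monotonicity $\TC_{\sigma_0}(k)\subseteq\TC_{\sigma_{i_0}}(k)$ along faces, which is exactly what lets the characteristic maps descend and sends cell boundaries into the lower skeleton with the appropriate, possibly larger, isotropy) is straightforward.
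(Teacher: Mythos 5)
Your proposal is correct and follows the paper's own route: the paper likewise observes that $Y_\Sigma(k)$ is by construction a finite $\TC(k)$-CW complex with cells $(\TC(k)/\TC_{\sigma_0}(k))\times|\alpha|$, one per simplex $\alpha\in\FF(\Sigma)$, and then deduces both statements directly from Theorem~\ref{main-result}. Your additional verification of the cell structure (growth of isotropy along faces, compactness/Hausdorffness for the point-set details) is exactly the routine checking the paper leaves implicit.
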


Let $D_\Sigma(k)$ be the diagram of spaces over~$\Sigma$ that
% (\cf~\cite[Sec.~2.1]{WelkerZieglerZivaljevic:99})
assigns $\TC(k)/\TC_\sigma(k)$ to~$\sigma\in\Sigma$,
and the projection~$\TC(k)/\TC_\sigma(k)\to \TC(k)/\TC_\tau(k)$
to~$\sigma\le\tau$.
Comparing \eqref{topology-complete}
with the standard construction of a homotopy colimit
of a diagram of spaces,
\cf~\cite[Sec.~2]{WelkerZieglerZivaljevic:99},
we arrive at the following observation, which was made
% The following result was stated
in~\cite[Prop.~5.3]{WelkerZieglerZivaljevic:99}
for compact complex toric varieties:

\begin{corollary}
  % Let $k=\C$,~$\R$ or~$\R_+$.
  The space~$X_\Sigma(k)$ is the homotopy colimit
  of the diagram~$D_\Sigma(k)$.
\end{corollary}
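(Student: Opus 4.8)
The plan is to observe that the space $Y_\Sigma(k)$ of~\eqref{topology-complete} is, essentially by construction, a model for the homotopy colimit of~$D_\Sigma(k)$, and then to transport this identification to~$X_\Sigma(k)$ along Theorem~\ref{main-result}. Since $\Sigma$ is a finite poset, its nerve realizes to the order complex, $|N(\Sigma)|=|\FF(\Sigma)|$, and the Bousfield--Kan construction of the homotopy colimit (\cf~\cite[Sec.~2]{WelkerZieglerZivaljevic:99}) is the realization of the simplicial space whose $p$-simplices, indexed by the strictly ascending chains $\alpha=(\sigma_0<\dots<\sigma_p)$ of~$\Sigma$, carry the fibre $\TC(k)/\TC_{\sigma_0}(k)$ attached to the \emph{smallest} cone of the chain. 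This variance is the one dictated by $D_\Sigma(k)$ being covariant: the zeroth face map applies the projection $\TC(k)/\TC_{\sigma_0}(k)\to\TC(k)/\TC_{\sigma_1}(k)$, which is defined because $\sigma_0\le\sigma_1$ forces the inclusion $\TC_{\sigma_0}(k)\subseteq\TC_{\sigma_1}(k)$ (a larger linear span gives a smaller annihilator, hence a smaller stabilizer).

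First I would write $D_\Sigma(k)$ as the levelwise quotient of the constant diagram $\underline{\TC(k)}$ by the sub-diagram of subgroups $\sigma\mapsto\TC_\sigma(k)$; this makes sense precisely because of the inclusions $\TC_\sigma(k)\subseteq\TC_\tau(k)$ for $\sigma\le\tau$ just noted. The homotopy colimit of the constant diagram $\underline{\TC(k)}$ over the poset~$\Sigma$ is $\TC(k)\times|N(\Sigma)|=\TC(k)\times|\FF(\Sigma)|$, and since $\operatorname{hocolim}$ is the realization of a simplicial space, hence a colimit, it commutes with the levelwise quotient. Therefore $\operatorname{hocolim} D_\Sigma(k)$ is the quotient of $\TC(k)\times|\FF(\Sigma)|$ by the relation that, over the interior of the simplex of a chain $\alpha=(\sigma_0<\dots<\sigma_p)$, identifies $(t_1,x)$ with $(t_2,x)$ whenever $t_1t_2^{-1}\in\TC_{\sigma_0}(k)$. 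This is word for word the identification in~\eqref{topology-complete}, so $\operatorname{hocolim} D_\Sigma(k)$ and $Y_\Sigma(k)$ are the same space. Finiteness of~$\Sigma$ keeps all of this free of point-set subtleties: the finite quotient commutes with the product by $\TC(k)$, and the realization already uses only the nondegenerate (i.e.\ strict) chains, because inserting an identity morphism into a chain leaves the fibre $\TC(k)/\TC_{\sigma_0}(k)$ untouched, so the degenerate part is a closed subspace at each level.

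Finally I would invoke Theorem~\ref{main-result}: for complete~$\Sigma$ the toric variety $X_\Sigma(k)$ is $\TC(k)$-equivariantly homeomorphic to~$Y_\Sigma(k)$, and in general it contains $Y_\Sigma(k)$ as an equivariant strong deformation retract; in either case $X_\Sigma(k)$ is homotopy equivalent to $Y_\Sigma(k)=\operatorname{hocolim} D_\Sigma(k)$, which is exactly the assertion (and which recovers \cite[Prop.~5.3]{WelkerZieglerZivaljevic:99} in the compact complex case). The one point that needs care, and the main thing to get right, is the second paragraph: that the bar construction of $\operatorname{hocolim} D_\Sigma(k)$ really produces the quotient by the subgroup at the \emph{smallest} cone of each chain — matching the variance of $D_\Sigma(k)$ with the way the identification in~\eqref{topology-complete} is set up — together with the bookkeeping that realization commutes with the levelwise quotient. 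Everything else is formal.
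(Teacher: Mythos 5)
Your argument is correct and takes essentially the same route as the paper, which likewise identifies $Y_\Sigma(k)$ with the standard (Bousfield--Kan) homotopy colimit of~$D_\Sigma(k)$ by direct comparison with~\eqref{topology-complete} and then transports this to~$X_\Sigma(k)$ via Theorem~\ref{main-result}. (One parenthetical slip: for $\sigma_0\le\sigma_1$ the smaller annihilator of~$\sigma_1$ yields the \emph{larger} subtorus, $\TC_{\sigma_0}(k)\subseteq\TC_{\sigma_1}(k)$ — which is exactly the inclusion you then correctly use.)
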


\section{A topological description of toric varieties}
\label{proof-main-result}

\noindent
We write $\Sigma_i\subset\Sigma$ for the subset of $i$-dimensional cones
% of~$\Sigma$
and $\Sigmamax$ for the set of (with respect to inclusion) maximal cones.
For a cone~$\sigma\in\Sigma$,
let $N_\sigma$ be the intersection of~$N$ with the linear hull of~$\sigma$,
and $\pi_\sigma\colon N\to N(\sigma)=N/N_\sigma$ be the quotient map
as well as its analogue over~$\R$.

\subsection{Case of
  complete~\texorpdfstring{$\Sigma$}{Sigma}
  and~\texorpdfstring{$k=\R_+$}{k=R+}}
\label{proof-complete-positive}

\noindent
Since $\TC(k)=1$ is trivial in this case,
it suffices to exhibit a triangulation
of the non-negative part~$X_\Sigma(\R_+)$
% of~$X_\Sigma$
isomorphic with~$\FF(\Sigma)$.
For each simplex~$\alpha=(\sigma_0,\dots,\sigma_p)\in\FF(\Sigma)$,
we will construct a % singular
$p$-simplex~$B(\alpha)$ in~$X_{\sigma_p}(\R_+)$
whose interior%
\footnote{By `interior' we always mean `relative interior'.}
lies in the orbit~$\O_{\sigma_0}(\R_+)$
corresponding to the initial vertex~$\sigma_0$ of~$\alpha$.
% This will be done inductively.

Choose a point~$v_\sigma\in N$ in the interior
of~$\sigma$, for example the sum of the
minimal integral generators of the extremal rays of~$\sigma$.
Let
\begin{equation}
  \lambda_\sigma\colon(0,\infty)\to \TA(\R_+),
  \quad
  t\mapsto\Bigl(m\mapsto t^\pair{m,v_\sigma}\Bigr) % \exp(t\,v_\sigma)
\end{equation}
be the corresponding $1$-parameter subgroup.
% of~$T$ with differential~$v_\sigma$.
Because $\Sigma$ is complete, the variety~$X_\Sigma(\R_+)$ is compact,
so that the limit~$\lambda_\sigma(0)\,x:=\lim_{t\to0}\lambda_\sigma(t)\,x$
exists for all~$x\in X_\Sigma(\R_+)$. For the limits we are interested in,
this will become evident during the proof of the following lemma.

\begin{lemma}\label{image-cube}
  Let $\alpha=(\sigma_0,\dots,\sigma_p)\in\FF(\Sigma)$ be a $p$-simplex,
  $p\ge1$.
  Then the $\TA(\R_+)$-action on~$X_\Sigma(\R_+)$ induces a continuous map
  \begin{equation*} %\label{definition-A}
    \phi_{\alpha}\colon[0,1]^p\to X_{\sigma_p}(\R_+),
    \quad
    t=(t_1,\dots,t_p)\mapsto
      \lambda_{\sigma_p}(t_p)\cdots\lambda_{\sigma_1}(t_1)\,x_{\sigma_0}.
  \end{equation*}
  Moreover, $\phi_{\alpha}(t)=x_{\sigma_p}$
  if $t_p=0$, and
  $\phi_{\alpha}(t)\neq \phi_{\alpha}(t')$
  if $t_p\neq t'_p$.
\end{lemma}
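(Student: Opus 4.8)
The plan is to write $\phi_\alpha$ out explicitly in the affine chart $X_{\sigma_p}(\R_+)=\Hom(\sigma_p^\vee\cap M,\R_+)$ and to read all assertions off the resulting formula. Since $\sigma_0\le\sigma_p$, the distinguished point $x_{\sigma_0}$ lies in $X_{\sigma_p}(\R_+)$, which is $\TA(\R_+)$-stable, so for $t\in(0,\infty)^p$ the point $\phi_\alpha(t)$ is defined and in fact lies in $\O_{\sigma_0}(\R_+)\subseteq X_{\sigma_p}(\R_+)$. Unwinding the $\TA(\R_+)$-action on monoid homomorphisms, and using that $\TA(\R_+)$ is abelian, one gets for every $m\in\sigma_p^\vee\cap M$
\begin{equation*}
  \phi_\alpha(t)(m)=\Bigl(\prod_{i=1}^p t_i^{\pair{m,v_{\sigma_i}}}\Bigr)\,x_{\sigma_0}(m),
  \qquad
  x_{\sigma_0}(m)=\begin{cases}1&m\in\sigma_0^\perp,\\0&\text{otherwise.}\end{cases}
\end{equation*}

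I would next record two elementary facts. Because $\sigma_i\subseteq\sigma_p$, every $m\in\sigma_p^\vee\cap M$ is non-negative on $\sigma_i$, so each exponent $\pair{m,v_{\sigma_i}}$ is a non-negative integer; and since $v_{\sigma_i}$ lies in the relative interior of $\sigma_i$, this integer vanishes precisely when $m\in\sigma_i^\perp$ (a non-negative functional vanishing at an interior point of a cone vanishes on the whole cone). Also, since $\sigma_0\subseteq\sigma_p$, two elements of $\sigma_p^\vee$ are non-negative on $\sigma_0$, so their sum vanishes on $\sigma_0$ iff both do; hence $m\mapsto x_{\sigma_0}(m)$ is multiplicative on $\sigma_p^\vee\cap M$. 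Combining these, the right-hand side above is, for each fixed $t\in\R^p$, a monoid homomorphism $\sigma_p^\vee\cap M\to\R_+$, i.e.\ a point of $X_{\sigma_p}(\R_+)$, whose values on a finite generating set of $\sigma_p^\vee\cap M$ are monomials in $t$. This exhibits the continuous extension $\phi_\alpha\colon[0,1]^p\to X_{\sigma_p}(\R_+)$ (unique because $(0,1]^p$ is dense in $[0,1]^p$) and, in passing, shows that the relevant limits $\lambda_\sigma(0)\,x$ exist.

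For the last two assertions I would argue from the same formula. If $t_p=0$: when $m\in\sigma_p^\perp$, all exponents $\pair{m,v_{\sigma_i}}$ vanish (as $\sigma_i\subseteq\sigma_p$) and $x_{\sigma_0}(m)=1$, so $\phi_\alpha(t)(m)=1=x_{\sigma_p}(m)$; when $m\notin\sigma_p^\perp$, either $m\notin\sigma_0^\perp$ so $\phi_\alpha(t)(m)=0$, or $m\in\sigma_0^\perp$ so $\pair{m,v_{\sigma_p}}>0$ by the first fact and hence again $\phi_\alpha(t)(m)=0$; either way $\phi_\alpha(t)(m)=x_{\sigma_p}(m)$, so $\phi_\alpha(t)=x_{\sigma_p}$. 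For injectivity in $t_p$, I would choose $m_0\in\sigma_p^\vee\cap M$ with $\sigma_p\cap m_0^\perp=\sigma_{p-1}$, which is possible since $\sigma_{p-1}$ is a proper face of the rational cone $\sigma_p$; properness forces $m_0\notin\sigma_p^\perp$, while $m_0\in\sigma_{p-1}^\perp\subseteq\sigma_0^\perp$ and $\pair{m_0,v_{\sigma_i}}=0$ for $i<p$, so $\phi_\alpha(t)(m_0)=t_p^{\,c}$ with $c=\pair{m_0,v_{\sigma_p}}$ a positive integer. Since $t\mapsto t^{\,c}$ is strictly increasing on $[0,1]$, $t_p\ne t'_p$ forces $\phi_\alpha(t)(m_0)\ne\phi_\alpha(t')(m_0)$, hence $\phi_\alpha(t)\ne\phi_\alpha(t')$. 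The one place deserving care is the middle step: checking that the extended formula really takes values in the single chart $X_{\sigma_p}(\R_+)$ (not merely in $X_\Sigma(\R_+)$) and is a monoid homomorphism for all $t$, degenerate ones included — which is exactly where non-negativity of the exponents on $\sigma_p$ and multiplicativity of $x_{\sigma_0}$ are used.
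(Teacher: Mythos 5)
Your proposal is correct and follows essentially the same route as the paper: the explicit coordinate formula $\phi_\alpha(t)(m)=\prod_i t_i^{\pair{m,v_{\sigma_i}}}x_{\sigma_0}(m)$, non-negativity of the exponents (since $v_{\sigma_i}\in\sigma_i\subseteq\sigma_p$ and $m\in\sigma_p^\vee$) for well-definedness and continuity, the case split on $m\in\sigma_p^\perp$ for the value at $t_p=0$, and a functional in $\sigma_p^\vee\cap M$ vanishing on $\sigma_{p-1}$ but not on $\sigma_p$ to recover $t_p$. The extra checks you include (multiplicativity of the limiting formula and continuity via a finite generating set) merely spell out details the paper leaves implicit.
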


\begin{proof}
  By the definition of the action of~$\TA(k)=\Hom(M,k)$
  on~$X_\sigma(k) = \Hom(\sigma^\vee\cap M,k)$,
  we have
  \begin{equation*}
    \bigl(\lambda(t)\,x\bigr)(m)
      = \lambda(t)(m)\cdot x(m) = t^\pair{m,v}\cdot x(m).
  \end{equation*}
  for~$x\in X_\sigma(k)$, $m\in\sigma^\vee\cap M$,
  and the $1$-parameter subgroup~$\lambda$ with differential~$v\in N$.
  In our case, all exponents in
  \begin{equation*}
    \phi_{\alpha}(t)(m)
    % = \bigl(\lambda_{\sigma_p}(t_p)\cdots\lambda_{\sigma_1}(t_1)\,
    %     x_{\sigma_0}\bigr)(m)
    = \prod_i t_i^\pair{m,v_{\sigma_i}}\cdot x_{\sigma_0}(m)
  \end{equation*}
  are non-negative since all $v_{\sigma_i}$ lie in~$\sigma_p$.
  % and $m_j\in\sigma_p^\vee$.
  Hence the map~$\phi_{\alpha}$ % \eqref{definition-A}
  is well-defined and continuous.

  If~$m\in\sigma_p^\perp$, then $m\in\sigma_i^\perp$ for all~$i$,
  hence $\phi_{\alpha}(0)(m)=1$. If $m\notin\sigma_p^\perp$,
  then $\lambda_{\sigma_p}(0)(m)=x_{\sigma_p}(m)=0$, hence
  $\phi_{\alpha}(0)(m)=0$. Therefore, $\phi_{\alpha}(0)=x_{\sigma_p}$.

  Since $\sigma_{p-1}$ is a face of~$\sigma_p$, there is an
  element~$m\in\sigma_p^\vee\cap M$ vanishing on~$\sigma_{p-1}$,
  hence on all~$\sigma_i$, $i<p$, but not on~$\sigma_p$.
  Then
  \begin{equation*}
    \phi_{\alpha}(t)(m)
    = t_p^\pair{m,v_{\sigma_p}}, % (x_{\sigma_0})_j,
  \end{equation*}
  which shows that $\phi_{\alpha}(t)$ determines $t_p$.
\end{proof}

Let $\alpha=(\sigma_0,\dots,\sigma_p)\in\FF(\Sigma)$ be a $p$-simplex.
Applying Lemma~\ref{image-cube} repeatedly % $p$~times
proves that the image~$B(\alpha)$
of $\phi_{\alpha}$ is a $p$-simplex
with vertices~$x_{\sigma_0}$,~\ldots,~$x_{\sigma_p}$.
Its interior is contained in~$\O_{\sigma_0}(\R_+)$,
and its proper faces are the simplices corresponding
to proper subsequences of~$\alpha$. % $(\sigma_0,\dots,\sigma_p)$.
It remains to verify the following claim:

\begin{lemma}
  The interiors of the simplices~$B(\alpha)$,
  $\alpha\in\FF(\Sigma)$,
  form a partition of~$X_\Sigma(\R_+)$.
\end{lemma}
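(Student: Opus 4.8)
The plan is to exploit the orbit decomposition $X_\Sigma(\R_+)=\bigsqcup_{\sigma\in\Sigma}\O_\sigma(\R_+)$ and refine it simplex by simplex. We already know from Lemma~\ref{image-cube} and the paragraph following it that, for each $\alpha=(\sigma_0,\dots,\sigma_p)\in\FF(\Sigma)$, the open simplex $\mathring B(\alpha)$ is contained in $\O_{\sigma_0}(\R_+)$. So it suffices to prove that, for a fixed cone $\tau\in\Sigma$, the open simplices $\mathring B(\alpha)$ with initial vertex $\sigma_0=\tau$ partition the orbit $\O_\tau(\R_+)$. Passing to the quotient $N(\tau)=N/N_\tau$, one has $\O_\tau(\R_+)\cong\TA(\R_+)/\TA_\tau(\R_+)$ which is a copy of $(0,\infty)^{n-\dim\tau}$, and under this identification $\O_\tau(\R_+)$ is the big orbit of the non-negative toric variety $X_{\Sigma/\tau}(\R_+)$ associated with the \emph{star} (quotient fan) of $\tau$. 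The simplices $B(\alpha)$ with $\sigma_0=\tau$ correspond, after this reduction, to the simplices $B(\bar\alpha)$ for $\bar\alpha$ a simplex of $\FF(\Sigma/\tau)$ with initial vertex the origin; so by an induction on $n-\dim\tau$ it is enough to treat the single case $\tau=0$, \ie\ to show that the open simplices $\mathring B(\alpha)$ over simplices $\alpha=(0,\sigma_1,\dots,\sigma_p)$ whose first nonzero cone $\sigma_1$ is a ray partition the big orbit $\TA(\R_+)\cong(0,\infty)^n$.

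Concretely, I would fix an interior point $x\in\O_\tau(\R_+)$ and show that it lies in exactly one $\mathring B(\alpha)$. For \textbf{existence}, run the limit construction in reverse: among the rays $\rho$ with $\rho>\tau$ in $\Sigma$, the flow $\lambda_{v_\rho}(t)x$ as $t\to\infty$ either stays in a bounded region for every $\rho$ (in which case one checks $x=x_\tau$ and $x\in\mathring B((\tau))$), or it converges, for some choice of $\rho$, to a point $x'$ in a strictly lower-dimensional orbit $\O_{\tau'}(\R_+)$ with $\tau'>\tau$; tracking where $x$ sits along this flow produces the last coordinate $t_p$, and then one recurses on $x'$ inside $X_{\Sigma/\tau'}(\R_+)$. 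This is essentially the statement that every point of a toric variety has a unique limit under a generic one-parameter subgroup, combined with the fact that the $v_\sigma$ were chosen in the interiors of the cones, so that the flow respects the face structure and the ascending chain $\tau<\tau'<\dots$ it produces is exactly a simplex of $\FF(\Sigma)$. For \textbf{uniqueness}, suppose $x\in\mathring B(\alpha)\cap\mathring B(\alpha')$; the initial vertices agree (both equal $\tau$, the cone whose orbit contains $x$), and then the last assertion of Lemma~\ref{image-cube}, namely that $\phi_\alpha(t)$ determines $t_p$, lets one peel off the top coordinate and the top cone $\sigma_p$, reducing to a lower-dimensional simplex; induction finishes it.

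The main obstacle is the \textbf{existence} half: one must argue that \emph{every} interior point of $\O_\tau(\R_+)$ actually arises as $\phi_\alpha(t)$ for some chain $\alpha$ and some $t\in(0,1)^p$, \ie\ that the simplices $B(\alpha)$ over chains starting at $\tau$ genuinely cover, not merely inject into, the closure $\overline{\O_\tau(\R_+)}$. Equivalently, after the reduction to $\tau=0$, that the union of the cubes (images of $\phi_\alpha$) exhausts the compact space $X_\Sigma(\R_+)$ — this uses completeness of $\Sigma$ in an essential way, since it is what guarantees the relevant limits exist and lie in the variety, and it is what makes the star fans $\Sigma/\tau$ complete as well so that the induction goes through. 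I would phrase this as: parametrise $x\in\TA(\R_+)\cong(0,\infty)^n$ by $y=(\log x)\in N_\R$ ($y$ identified with the point $m\mapsto x(m)$ via $x(m)=e^{\pair{m,y}}$), note that $\lambda_{v_\sigma}(t)$ acts by $y\mapsto y+(\log t)\,v_\sigma$, so that the cube $\phi_\alpha([0,1]^p)$ corresponds, in the $y$-picture, to the ``moment cone'' $y_0 + \R_{\le 0}v_{\sigma_1}+\dots+\R_{\le0}v_{\sigma_p}$ translated to the cone $\sigma_p$'s worth of limit directions; completeness of $\Sigma$ then says precisely that these translated negative-orthant cones, as $\alpha$ ranges over all chains, tile $N_\R$ together with the boundary strata, which is the barycentric-subdivision picture mentioned in the introduction. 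Once the covering is established, disjointness of the interiors from the uniqueness argument above turns ``covering'' into ``partition,'' completing the proof.
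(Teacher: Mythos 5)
Your final paragraph is exactly the paper's argument: the exponential/logarithm map identifies each orbit $\O_\tau(\R_+)$ with $N(\tau)_\R$, under which the interiors of the simplices $B(\alpha)$ with initial vertex $\tau$ become the interiors of the negatives of the cones of the barycentric subdivision of the star of $\tau$, a complete fan whose cone interiors partition the space; your reduction to the dense orbit of the star fan is the same computation, merely phrased through the quotient variety. One caution: disjointness of the interiors should be read off from this fan/tiling statement, not from the injectivity clause of Lemma~\ref{image-cube}, which only pins down $t_p$ within a single fixed chain and does not by itself identify the top cone when comparing two different chains.
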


\begin{proof}
  It suffices to show that interiors of the simplices %~$\alpha$
  with initial vertex~$\sigma_0=\sigma$
  partition $\O_\sigma(\R_+)$.
  To this end we define a complete fan~$\Sigma_\sigma$ in~$N(\sigma)_\R$
  which is the
  ``barycentric subdivision''
  of the star of~$\sigma$.
  Its cones~$\tau_\sigma(\alpha)$
  are labelled by simplices~$\alpha=(\sigma_0,\dots,\sigma_p)\in\FF(\Sigma)$
  with initial vertex~$\sigma_0=\sigma$, and
  % $\tau_\sigma(\alpha)$ % \in\Sigma_\sigma$ is
  are spanned by the rays through the
  vectors~$\pi_\sigma(v_{\sigma_1})$,~\ldots,~$\pi_\sigma(v_{\sigma_p})$.
  (Observe that $\pi_\sigma(v_{\sigma_i})$ is an interior point
  of~$\pi_\sigma(\sigma_i)$.)

  The exponential map~$\exp_\sigma\colon N(\sigma)_\R\to\O_\sigma(\R_+)$
  is a real analytic isomorphism, in particular bijective.
  It is clear from Lemma~\ref{image-cube}
  that $\exp_\sigma$ maps the interior of the
  cone~$-\tau_\sigma(\alpha)$ onto the interior
  of~$B(\alpha)$.
  Since the interiors of the cones in~$\Sigma_\sigma$
  partition $N(\sigma)_\R$, this proves the claim.
\end{proof}

\subsection{Case of
  complete~\texorpdfstring{$\Sigma$}{Sigma}
  and arbitrary~\texorpdfstring{$k$}{k}}

\noindent
The inclusion~$\R_+\!\hookrightarrow k$ induces an inclusion
% \begin{equation}\label{inclusion-k}
$X_\Sigma(\R_+)\hookrightarrow X_\Sigma(k)$,
% \end{equation}
% sending the orbit~$\O_\sigma(\R_+)$ to~$\O_\sigma(k)$,
similarly, the norm~$k\to\R_+$, $z\mapsto |z|$
induces a retraction
% \begin{equation}\label{retraction-k}
$X_\Sigma(k)\to X_\Sigma(\R_+)$.
% \end{equation}
Both maps are compatible with the orbit structures.
This implies that the restriction
\begin{equation}\label{surjection}
  \TC(k)\times X_\Sigma(\R_+)\to X_\Sigma(k).
\end{equation}
of the $\TA(k)$-action on~$X_\Sigma(k)$
is surjective and descends to a $\TC(k)$-equivariant bijection
% \begin{equation}
$Y_\Sigma(k)\to X_\Sigma(k)$.
% \end{equation}
This map must be a homeomorphism since $Y_\Sigma(k)$ is compact
and $X_\Sigma(k)$ Hausdorff.

\subsection{Case of arbitrary~\texorpdfstring{$\Sigma$}{Sigma}}
\label{proof-arbitrary}

\noindent
Any rational fan is a subfan of a complete rational fan~$\tilde\Sigma$,
\cf~\cite[Thm.~9.3]{Ewald:96}; equivalently,
any toric variety~$X_\Sigma(k)$ is a $\TA(k)$-stable open subvariety
of a complete toric variety~$X_{\tilde\Sigma}(k)$.
The order complex~$\FF(\tilde\Sigma)$ contains $\FF(\Sigma)$
as a full subcomplex.
% containing
% the simplices of the form~$\alpha=(\sigma_0,\dots,\sigma_p)$ with
% $\sigma_p\in\Sigma$ (and hence $\sigma_i\in\Sigma$ for all~$i$).

Recall from Section~\ref{proof-complete-positive}
that the interior of the simplex~$B(\alpha)$
is contained in the orbit corresponding to the initial vertex of~$\alpha$.
% ~$\O_{\sigma_0}(\R_+)$, where $\sigma_0$
% is the initial vertex of~$\alpha=(\sigma_0,\dots,\sigma_p)$.
Therefore, the closed $\TA(\R_+)$-subvariety
\begin{equation}
  Z = X_{\tilde\Sigma}(\R_+)\setminus X_\Sigma(\R_+)
\end{equation}
is the union of all simplices~$B(\sigma_0,\dots,\sigma_p)$ such
that $\sigma_0$, hence all vertices~$\sigma_i$ are not in~$\Sigma$.
Denote this subcomplex
of~$\FF(\tilde\Sigma)$ by~$L$. Then $\FF(\Sigma)$ and $L$ are full subcomplexes
of~$\FF(\tilde\Sigma)$ on complementary vertex sets.
This implies that $|\FF(\Sigma)|$ is a strong deformation retract
of~$|\FF(\tilde\Sigma)|\setminus|L|$,
\cf~\cite[Lemma~70.1]{Munkres:84}.

The $\TC(k)$-equivariant homeomorphism~%
$Y_{\tilde\Sigma}(k)\to X_{\tilde\Sigma}(k)$
given by~\eqref{surjection} % (with $\tilde\Sigma$ instead of~$\Sigma$)
induces a $\TC(k)$-homeomorphism between
\begin{equation}
  Y = \bigl(\,\TC(k)\times(|\FF(\tilde\Sigma)|\setminus|L|)\,\bigr)
    \bigm/\mathord\sim
\end{equation}
and $X_{\tilde\Sigma}(k)\setminus Z = X_\Sigma(k)$.
The canonical strong deformation retraction~%
$|\FF(\tilde\Sigma)|\setminus|L|\to|\FF(\Sigma)|$
finally induces a $\TC(k)$-equivariant strong deformation
retraction~$Y\to Y_\Sigma(k)$.

\section{Cubical subdivisions}\label{cubical}

\noindent
Any simple polytope~$P$ admits a ``cubical subdivision''
with one full-dimensional cube per vertex,
\cf~\cite[Sec.~4.2]{BuchstaberPanov:02}.
If $\Sigma$ is a complete simplicial fan
(as is the case for the normal fan of a simple polytope),
then the homeomorphism~$Y_\Sigma(\R_+)\approx X_\Sigma(\R_+)$
permits us to define
a cubical structure on~$X_\Sigma(\R_+)$ by setting
\begin{equation}\label{definition-I-sigma}
  I_\tau^\sigma = \bigcup_{\substack{
                      (\sigma_0,\dots,\sigma_p)\in \FF(\Sigma)\\%
                      \tau\le\sigma_0,\;\sigma_p\le\sigma}}
                    B(\sigma_0,\dots,\sigma_p)
\end{equation}
for~$\tau\le\sigma$.
(The right-hand side of~\eqref{definition-I-sigma}
is the standard triangulation of a cube
along the main diagonal, \cf~\cite[Constr.~4.4]{BuchstaberPanov:02},
so $I_\tau^\sigma$ is indeed a cube of dimension~$\dim\sigma-\dim\tau$.)

There is another, more intrinsic description of this subdivision,
which in particular shows
that
it % the subdivision~\eqref{definition-I-sigma}
is canonical
and does not depend on the choice of interior points~$v_\sigma$
used to define the simplices~$B(\alpha)$ in Section~\ref{proof-main-result}.
In fact,
\begin{equation}\label{cube-toric-variety}
  I_0^\sigma=X_\sigma(I)\subset X_\sigma(\R_+),
\end{equation}
where $I$ denotes the multiplicative monoid~$[0,1]$.
To see this, observe that the union of the interiors of the simplices~$B(\alpha)$
with initial vertex~$\sigma_0$ and final vertex~$\sigma_p$
is the image of the interior
of the cone~$-\pi_{\sigma_0}(\sigma_p)\subset N({\sigma_0})_\R$
under the
exponential map~$\exp_{\sigma_0}\colon N(\sigma_0)_\R\to\O_{\sigma_0}(\R_+)$.

Note also that this proof shows that
the canonical inclusion~$X_\Sigma(I)\to X_\Sigma(\R_+)$
is in fact surjective. One sees similarly
that for the disc~$D^2=\{z\in\C:|z|\le 1\}$
% considered as submonoid~$D^2=\{z\in\C:|z|\le 1\}\subset\C$,
one has $X_\Sigma(D^2)=X_\Sigma(\C)$. If $\Sigma$ is regular,
we therefore obtain a canonical decomposition of
the smooth compact toric variety~$X_\Sigma(\C)$
into balls~$(D^2)^n$. % , \cf~\cite[Prop.~22]{Strickland:99}.
If $\Sigma$ is not complete, then it is clear from~\eqref{cube-toric-variety}
that we still have $X_\Sigma(I)=Y_\Sigma(\R_+)$,
and similarly $X_\Sigma(D^2)=Y_\Sigma(\C)$.

If $\Sigma$ is a subfan of the cone spanned by a basis of~$N$, then
$X_\Sigma(\C)$ is the complement of a complex coordinate subspace arrangement,
and $X_\Sigma(D^2)$ is the moment-angle complex
associated with the simplicial fan~$\Sigma$,
considered as a simplicial complex,
see~\cite[Ch.~6]{BuchstaberPanov:02}.
Therefore, Theorem~\ref{main-result} includes
the well-known fact that moment-angle complexes
and complements of complex coordinate subspace arrangements
are equivariantly homotopy-equivalent
\cite[Prop.~20]{Strickland:99}
(see also \cite[Lemma~2.13]{BuchstaberPanov:03}).

\section{Piecewise polynomials}
\label{proof-no-odd-degree}

\noindent
An (integral) piecewise polynomial on the fan~$\Sigma$ is a function~$f$
from the support
\begin{equation}
  |\Sigma|=\bigcup_{\sigma\in\Sigma}\sigma \subset N_\R
\end{equation}
of~$\Sigma$ to~$\Z$ such that for any~$\sigma\in\Sigma$
the restriction~$\at{f}_\sigma$ of~$f$ to~$\sigma$ coincides
with the restriction of some polynomial,
integral with respect to the lattice~$N$.
The set~$\PP(\Sigma;\Z)$ of all piecewise polynomials on~$\Sigma$
is a ring under pointwise addition and multiplication. Moreover,
the canonical identification of~$H^*(B\TC;\Z)$
with the integral polynomials on~$N$ (see Step~1 below)
gives a morphism of rings~$H^*(B\TC;\Z)\to \PP(\Sigma;\Z)$
by restriction of functions to~$|\Sigma|$, hence endows $\PP(\Sigma;\Z)$
with the structure of an $H^*(B\TC;\Z)$-algebra.

The idea of the proof of Theorem~\ref{no-odd-degree}
is to identify the piecewise polynomials
on the fan~$\Sigma$ with the kernel of some ``Mayer--Vietoris differential''
for~$X_\Sigma$ and then to relate this kernel
to the
% Chang--Skjelbred sequence
so-called `Atiyah--Bredon sequence' for~$X_\Sigma$.

\subsection{Step~1}

Recall that any toric variety $X_\Sigma$ is covered
by the affine toric subvarieties~$X_\sigma$ where $\sigma$
runs through $\Sigmamax$, the set of maximal cones.
The intersection of any two affine toric subvarieties
~$X_\sigma$~and~$X_\tau$ is the affine toric subvariety~$X_{\sigma\cap\tau}$.

Any affine toric variety~$X_\sigma$ can be
equivariantly retracted onto its unique closed orbit~$\O_\sigma$
and the latter onto the $\TC$-orbit~$\TC/\TC_\sigma$ of~$x_\sigma$.
This establishes canonical isomorphisms
\begin{equation}\label{equivariant-cohomology-sigma}
  H_\TC^*(X_\sigma;\Z) = H_\TC^*(\O_\sigma;\Z) = H_\TC^*(\TC_\sigma;\Z) = \Z[\sigma],
\end{equation}
where $\Z[\sigma]$ denotes
the polynomials on~$N_\sigma$
(or, equivalently, on~$\sigma\cap N$)
with integer coefficients.
The induced grading on polynomials is twice the usual degree.
Moreover,
% under the canonical isomorphism~$H_\TC^*(X_\sigma;\Z)=\Z[\sigma]$,
for any pair~$\tau\le\sigma$
the map~$H_\TC^*(X_\sigma;\Z)\to H_\TC^*(X_\tau;\Z)$
induced by the inclusion~$X_\tau\hookrightarrow X_\sigma$
corresponds under the isomorphism~\eqref{equivariant-cohomology-sigma}
to the restriction of polynomials from~$N_\sigma$ to~$N_\tau$.
% proof: using a v in the interior of \tau, retract X_\sigma
% onto X_\sigma\cap\bar\O_\tau. This reduces the problem to
% one where \tau = 0, where it is clear.
In the following, we will not distinguish
between polynomials and elements in the various
cohomology groups in~\eqref{equivariant-cohomology-sigma}.

Fix some ordering of~$\Sigmamax$.
A piecewise polynomial on~$\Sigma$ can be given uniquely
by a collection of polynomials $f_\sigma\in\Z[\sigma]$, $\sigma\in\Sigmamax$,
that agree on common intersections.
In other words, we can identify $\PP(\Sigma;\Z)$
% the piecewise polynomials on~$\Sigma$
with the kernel of the map
\begin{equation}\label{mayer-vietoris}
\begin{split}
  \delta\colon
  \bigoplus_{\sigma_0\in\Sigmamax} H_\TC^*(X_{\sigma_0};\Z)
  &\to
  \bigoplus_{\substack{\sigma_0,\sigma_1\in\Sigmamax \\ \sigma_0<\sigma_1}}
        H_\TC^*(X_{\sigma_0\cap\sigma_1};\Z),
  \\
  (\delta f)_{\sigma_0\sigma_1}
    &= \at{f_{\sigma_1}}_{\sigma_0\cap\sigma_1}
     - \at{f_{\sigma_0}}_{\sigma_0\cap\sigma_0}
\end{split}
\end{equation}
where we have used the same notation as in~\cite[\S 8]{BottTu:82}.
(In fact, the map~$\delta$ is the differential %~$d_2^{0,*}$
between the first two columns
of the $E_2$~term of the Mayer--Vietoris spectral sequence
associated to our covering of~$X_\Sigma$ by maximal affine toric subvarieties.)

\subsection{Step~2}

Our first observation is standard (at least for cohomology
with field coefficients).

\begin{lemma}\label{cef-conditions}
  The following conditions are equivalent
  for a toric variety~$X_\Sigma$:
  \begin{enumerate}
  \item\label{criterion-a}
    $H^*(X_\Sigma;\Z)$ is concentrated in even degrees.
  \item\label{criterion-b}
    The Serre spectral sequence
    for the Borel construction of~$X_\Sigma$
    % fibration~$X\hookrightarrow X_\TC\to B\TC$
    degenerates at the~$E_2$~level.
  \item\label{criterion-c}
    The canonical map~$H_\TC^*(X_\Sigma;\Z)\to H^*(X_\Sigma;\Z)$ is a surjection.
  \end{enumerate}
\end{lemma}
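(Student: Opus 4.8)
The plan is to prove the three conditions equivalent by going around a cycle, say \eqref{criterion-a} $\Rightarrow$ \eqref{criterion-b} $\Rightarrow$ \eqref{criterion-c} $\Rightarrow$ \eqref{criterion-a}. The central object is the Borel fibration $X_\Sigma \hookrightarrow X_\Sigma \times_\TC E\TC \to B\TC$ and its Serre spectral sequence with $\Z$-coefficients, whose $E_2$-page is $H^*(B\TC;\Z) \otimes H^*(X_\Sigma;\Z)$ because $B\TC$ is simply connected (so there is no local-coefficient subtlety) and $H^*(B\TC;\Z)$ is a polynomial ring, hence free, so the Künneth formula applies without $\mathrm{Tor}$-terms. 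The key structural fact I would want to invoke is that $H^*(B\TC;\Z)$ is concentrated in even degrees.

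For \eqref{criterion-a} $\Rightarrow$ \eqref{criterion-b}: if $H^*(X_\Sigma;\Z)$ is concentrated in even degrees, then $E_2^{p,q}$ vanishes whenever $p$ or $q$ is odd, so every differential $d_r\colon E_r^{p,q}\to E_r^{p+r,q-r+1}$ has source and target in complementary parity (one of $p+q$, $p+r+q-r+1 = p+q+1$ is odd) and therefore vanishes. Hence the spectral sequence degenerates at $E_2$. For \eqref{criterion-b} $\Rightarrow$ \eqref{criterion-c}: degeneration at $E_2$ means $E_\infty^{0,q} = E_2^{0,q} = H^q(X_\Sigma;\Z)$, and the edge homomorphism $H_\TC^q(X_\Sigma;\Z) \to E_\infty^{0,q} = H^q(X_\Sigma;\Z)$ is exactly the restriction map to the fibre, i.e.\ the canonical map $H_\TC^*(X_\Sigma;\Z)\to H^*(X_\Sigma;\Z)$; an edge homomorphism onto $E_\infty^{0,*}$ is always surjective, so the canonical map is surjective.

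The implication \eqref{criterion-c} $\Rightarrow$ \eqref{criterion-a} is the one I expect to be the main obstacle, because it has to use something special about toric varieties rather than formal spectral-sequence nonsense. Here I would exploit the $\TC$-CW structure supplied by Corollary~\ref{T-CW-complex}: $X_\Sigma(\C)$ has the equivariant homotopy type of a finite $\TC$-CW complex built from cells of the form $(\TC/\TC_\sigma)\times|\alpha|$, equivalently from $X_\Sigma$ viewed as glued from affine pieces $X_\sigma$ whose equivariant cohomology $H_\TC^*(X_\sigma;\Z) = \Z[\sigma]$ is a free module over $H^*(B\TC;\Z)$ concentrated in even degrees (as recorded in \eqref{equivariant-cohomology-sigma}). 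One standard route: the Mayer--Vietoris spectral sequence for the cover of $X_\Sigma$ by the $X_\sigma$, $\sigma\in\Sigmamax$, has $E_1$-term built entirely from even-degree free $H^*(B\TC;\Z)$-modules $H_\TC^*(X_{\sigma_0\cap\dots\cap\sigma_p};\Z)$, so $H_\TC^*(X_\Sigma;\Z)$ is itself a direct sum of (a priori not necessarily even, because of the cohomological degree shift by $p$) pieces — this needs care. Alternatively, and more cleanly, I would argue: surjectivity of $H_\TC^*(X_\Sigma;\Z)\to H^*(X_\Sigma;\Z)$ forces all differentials in the Serre spectral sequence to vanish on the bottom row $E_r^{0,*}$; but $H^*(B\TC;\Z)$ is generated in degree $2$ and acts on the spectral sequence compatibly with differentials, and $E_2^{*,q}$ is a free $H^*(B\TC;\Z)$-module on $E_2^{0,q} = H^q(X_\Sigma;\Z)$, so $d_r$ vanishing on $E_r^{0,*}$ propagates (by the Leibniz rule and freeness) to $d_r = 0$ on all of $E_r$, giving degeneration; then $H^*(X_\Sigma;\Z) = E_2^{0,*}$ is a subquotient compatible with $H_\TC^*(X_\Sigma;\Z)$, and one reads off concentration in even degrees from the fact that, by the $\TC$-CW filtration argument above, $H_\TC^{\text{odd}}(X_\Sigma;\Z)$ maps to zero in each affine piece. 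The delicate point throughout is bookkeeping the cohomological degree shift in the Mayer--Vietoris / skeletal filtration against the claim of even concentration; I would handle it by first establishing \eqref{criterion-c} $\Rightarrow$ (degeneration), and then using degeneration to identify $H^*(X_\Sigma;\Z)$ with $E_\infty^{0,*}$ as a quotient of $H_\TC^*(X_\Sigma;\Z)/(H^{>0}(B\TC;\Z))$, whose even concentration follows because $H_\TC^*(X_\Sigma;\Z)$ is generated as an $H^*(B\TC;\Z)$-algebra by even classes coming from the affine cover.
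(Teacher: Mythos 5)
Your treatment of \eqref{criterion-a}~$\Rightarrow$~\eqref{criterion-b} and of \eqref{criterion-b}~$\Leftrightarrow$~\eqref{criterion-c} is fine (parity of the $E_2$-term, edge homomorphism, and $H^*(B\TC;\Z)$-linearity of the differentials); these are the parts the paper dismisses as valid for an arbitrary $\TC$-space. The genuine gap is in \eqref{criterion-c}~$\Rightarrow$~\eqref{criterion-a}, exactly where you anticipated trouble. Your ``cleaner'' route rests on the claim that $H_\TC^*(X_\Sigma;\Z)$ is generated as an $H^*(B\TC;\Z)$-algebra by even-degree classes coming from the affine cover, but nothing you say establishes this, and it is essentially equivalent to the statement being proved. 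The Mayer--Vietoris spectral sequence for the cover by the $X_\sigma$, $\sigma\in\Sigmamax$, does not give it: although each $H_\TC^*(X_\sigma;\Z)=\Z[\sigma]$ is even, the higher columns of that spectral sequence carry a cohomological shift, so they can a priori contribute odd-degree classes to $H_\TC^*(X_\Sigma;\Z)$ that do not restrict to the affine pieces at all; ruling out such contributions is precisely the content of the lemma. Likewise, degeneration of the Serre spectral sequence by itself only says that the associated graded of $H_\TC^n(X_\Sigma;\Z)$ is $\bigoplus_{p+q=n}H^p(B\TC;\Z)\otimes H^q(X_\Sigma;\Z)$, which contains the unknown odd groups $H^q(X_\Sigma;\Z)$; no parity conclusion can be extracted from it alone.

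The paper closes this gap with an input you never invoke: since all isotropy groups of a toric variety are connected and $X_\Sigma$ has the homotopy type of a finite $\TC$-CW complex (Corollary~\ref{T-CW-complex-C}), the integral Chang--Skjelbred/Atiyah--Bredon result of Franz--Puppe \cite{FranzPuppe:07} applies and shows that under condition~\eqref{criterion-c} the restriction $H_\TC^*(X_\Sigma;\Z)\to H_\TC^*(X_\Sigma^\TC;\Z)$ is injective. Because $X_\Sigma^\TC$ is a discrete (finite) set, the target is concentrated in even degrees, hence so is $H_\TC^*(X_\Sigma;\Z)$, and then the surjection of~\eqref{criterion-c} forces $H^*(X_\Sigma;\Z)$ to be even as well. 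To repair your argument you would need either to quote this injectivity theorem (or the exactness of the sequence~\eqref{atiyah-bredon} as in the proof of Proposition~\ref{maximal-full-dim}), or to supply an independent proof that the odd-degree part of $H_\TC^*(X_\Sigma;\Z)$ vanishes; the affine cover and the $\TC$-CW structure alone, as you use them, do not do this.
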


\begin{proof}
  The implication~%
  $\hbox{\eqref{criterion-a}}\Rightarrow\hbox{\eqref{criterion-b}}$
  as well as the equivalence~%
  $\hbox{\eqref{criterion-b}}\Leftrightarrow\hbox{\eqref{criterion-c}}$
  hold for any $\TC$-space.
  For~$\hbox{\eqref{criterion-c}}\Rightarrow\hbox{\eqref{criterion-a}}$
  we use that $H_\TC^*(X_\Sigma;\Z)$ injects into~$H_\TC^*(X_\Sigma^\TC;\Z)$,
  see~\cite{FranzPuppe:07}
  or the proof of Proposition~\ref{maximal-full-dim} below.
  Since $X_\Sigma^\TC$ is discrete, this forces $H_\TC^*(X_\Sigma;\Z)$
  to be concentrated in even degrees, hence also $H^*(X_\Sigma;\Z)$.
\end{proof}

\begin{proposition}\label{maximal-full-dim}
  If $X_\Sigma$ satisfies the conditions in Lemma~\ref{cef-conditions},
  then all maximal cones in~$\Sigma$ are full-dimensional.
\end{proposition}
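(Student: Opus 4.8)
The plan is to argue by contraposition: if $\Sigma$ has a maximal cone~$\sigma$ that is not full-dimensional, then $H^*(X_\Sigma;\Z)$ has odd-degree cohomology (equivalently, one of the equivalent conditions of Lemma~\ref{cef-conditions} fails). By Lemma~\ref{cef-conditions} it suffices to contradict condition~\eqref{criterion-c}, that $H_\TC^*(X_\Sigma;\Z)\to H^*(X_\Sigma;\Z)$ is surjective; or, following the proof of that lemma, to produce odd-degree classes directly. First I would observe that a maximal cone~$\sigma$ of dimension~$d<n$ gives rise, via the affine toric subvariety~$X_\sigma$, to an open $\TA$-stable subset of~$X_\Sigma$ that retracts onto the closed orbit~$\O_\sigma\cong\TA/\TA_\sigma$, which is a torus of positive dimension~$n-d$. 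The point is that because $\sigma$ is maximal, $X_\sigma$ is \emph{also} a union of orbits that is closed under taking orbit closures "downward" in the relevant sense — more precisely, $\O_\sigma$ is a closed orbit and nothing in $\Sigma$ dominates $\sigma$.

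The key step is to exploit the splitting/injectivity statement cited in the proof of Lemma~\ref{cef-conditions}: under the hypotheses, $H_\TC^*(X_\Sigma;\Z)$ injects into $H_\TC^*(X_\Sigma^\TC;\Z)$, and $X_\Sigma^\TC$ is the (finite) set of distinguished points~$x_\rho$ for the full-dimensional cones~$\rho\in\Sigma$. If $\sigma$ is maximal but not full-dimensional, then $X_\sigma$ — an open piece of $X_\Sigma$ — contains \emph{no} $\TC$-fixed point at all, since the only candidate fixed point in~$X_\sigma$ is~$x_\sigma$, and $x_\sigma$ is fixed only when $\sigma$ is full-dimensional. I would then use the restriction map $H_\TC^*(X_\Sigma;\Z)\to H_\TC^*(X_\sigma;\Z)=\Z[\sigma]$ together with equivariant formality (condition~\eqref{criterion-b}/\eqref{criterion-c}): equivariant formality of $X_\Sigma$ localizes to its open invariant pieces in a way that forces $X_\sigma$ to be equivariantly formal as well, and then the fixed-point injectivity applied to~$X_\sigma$ gives a contradiction because $X_\sigma^\TC=\varnothing$ while $H_\TC^*(X_\sigma;\Z)=\Z[\sigma]\ne 0$ — an equivariantly formal space with nonzero equivariant cohomology and empty fixed-point set cannot exist (the localization theorem, or simply the fact that $H^*(X_\sigma;\Z)\ne 0$ forces $H_\TC^*(X_\sigma;\Z)^\TC\ne 0$ hence $X_\sigma^\TC\ne\varnothing$). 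Alternatively, and perhaps more elementarily, $X_\sigma\simeq \O_\sigma\simeq \TA/\TA_\sigma$ has the homotopy type of a torus $(S^1)^{n-d}$ with $n-d\ge 1$, so $H^*(X_\sigma;\Z)$ has odd-degree classes; transporting an odd class back via a suitable Mayer–Vietoris / restriction argument for the covering of~$X_\Sigma$ by maximal affine pieces shows $H^*(X_\Sigma;\Z)$ cannot be concentrated in even degrees.

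The main obstacle I anticipate is the transport step: passing from "the open piece $X_\sigma$ has odd cohomology" to "$X_\Sigma$ has odd cohomology." Openness alone does not pull cohomology back. The clean way around this is to stay equivariant and use condition~\eqref{criterion-c}: if $H_\TC^*(X_\Sigma;\Z)\twoheadrightarrow H^*(X_\Sigma;\Z)$, then restricting along $X_\sigma\hookrightarrow X_\Sigma$ and using naturality of the Serre spectral sequence shows $X_\sigma$ inherits the degeneration, hence $H_\TC^*(X_\sigma;\Z)\twoheadrightarrow H^*(X_\sigma;\Z)$; but the left side is the polynomial ring $\Z[\sigma]$, concentrated in even degrees, while the right side $H^*(\TA/\TA_\sigma;\Z)=H^*((S^1)^{n-d};\Z)$ is not — contradiction. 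So the real content is just the homotopy equivalence $X_\sigma\simeq\TA/\TA_\sigma$ (which is standard, via the retraction onto the closed orbit already used in Step~1 of Section~\ref{proof-no-odd-degree}) plus the hereditary nature of equivariant formality under restriction to invariant open (or just arbitrary) subspaces, which is where I would be most careful to state the argument precisely rather than wave at it.
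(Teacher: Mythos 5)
Your strategy hinges on the claim that equivariant formality passes from $X_\Sigma$ to the invariant open subset~$X_\sigma$ (``naturality of the Serre spectral sequence shows $X_\sigma$ inherits the degeneration''). That step is false. The inclusion induces a map of Borel constructions, hence a map of Serre spectral sequences, but degeneration of the spectral sequence of the ambient space does not transfer to that of an invariant subspace: an odd-degree class of~$X_\sigma$ need not be the restriction of any class of~$X_\Sigma$, so no conflict with condition~(3) of Lemma~\ref{cef-conditions} for~$X_\Sigma$ ever arises. The simplest toric variety already refutes the hereditary claim: $\CP^1$ is equivariantly formal, yet its invariant open subset $\C^*=X_{\{0\}}$ carries a free circle action, so $H_\TC^*(\C^*;\Z)\cong\Z$ is concentrated in degree~$0$ and cannot surject onto $H^*(S^1;\Z)$. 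Note moreover that your contradiction step uses only $\dim\sigma<n$, not maximality of~$\sigma$; if the hereditary claim were true, the identical argument applied to the zero cone would show that \emph{every} cone of an equivariantly formal toric variety is full-dimensional, which $\CP^1$ contradicts. This ``proves too much'' test shows the gap is essential: maximality has to enter exactly at the point you left vague.

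The paper's proof extracts a genuinely stronger consequence from the hypotheses: since all isotropy groups are connected, the conditions of Lemma~\ref{cef-conditions} imply (by Franz--Puppe, applicable in the non-compact case thanks to the $\TC$-CW structure from Corollary~\ref{T-CW-complex-C}) that the Atiyah--Bredon sequence~\eqref{atiyah-bredon} is exact with integer coefficients. A maximal cone~$\tau$ of codimension $k>0$ contributes the summand $H_\TC^*(\bar\O_\tau,\partial\O_\tau;\Z)=H_\TC^*(\O_\tau;\Z)=\Z[\tau]$ to $H_\TC^*(X_k,X_{k-1};\Z)$, and \emph{maximality} guarantees this summand receives nothing from~$\delta^{k-1}$. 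One then exhibits a nonzero element of this summand killed by~$\delta^k$: for each facet~$\sigma$ of~$\tau$ the component of~$\delta^k$ towards~$\bar\O_\sigma$ factors through the restriction $\Z[\tau]\to\Z[\sigma]$ (after comparing $\bar\O_\sigma$ with the model $Y_{\hat\sigma}(\C)$ and modifying one orbit), so the product over all facets of polynomials in these kernels lies in $\ker\delta^k\setminus\operatorname{im}\delta^{k-1}$, contradicting exactness. To salvage your outline you would need a statement of this strength (exactness of the Chang--Skjelbred/Atiyah--Bredon sequence, or some localization argument relating $X_\Sigma$ to the piece at~$\tau$), not restriction of equivariant formality to open invariant subsets.
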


\begin{proof}
We abbreviate $X_\Sigma=X$.

Since in the case of toric varieties all isotropy groups are connected,
the conditions listed in Lemma~\ref{cef-conditions}
imply by a result of Franz--Puppe \cite{FranzPuppe:07}
that the ``Atiyah--Bredon sequence''
\begin{multline}\label{atiyah-bredon}
      0
      \longrightarrow H_\TC^*(X;\Z)
      \stackrel{\iota^*}\longrightarrow H_\TC^*(X_0;\Z)
      \stackrel{\delta^0}\longrightarrow H_\TC^{*+1}(X_1, X_0;\Z)
      \stackrel{\delta^1}\longrightarrow \cdots \\
      \cdots
      \stackrel{\delta^{n-1}}\longrightarrow H_\TC^{*+n}(X_n, X_{n-1};\Z)
      \longrightarrow 0.
\end{multline}
is exact.
(The first part of~\eqref{atiyah-bredon} up to~$H_\TC^{*}(X_1, X_0;\Z)$
is also called the ``Chang--Skjelbred sequence''.)
Here $X_i$ denotes the equivariant $i$-skeleton of~$X$,
\ie, the union of all orbits of dimension at most~$i$.
In particular, $X_0=X^\TC$, the fixed point set.
The map~$\iota^*$ is induced by the inclusion~$\iota\colon X^T\hookrightarrow X$,
and $\delta^i$ is the differential in the
long exact cohomology sequence for the triple~$(X_{i+1},X_i,X_{i-1})$.
Note that while \cite{FranzPuppe:07} work in the setting of
finite $\TC$-CW~complexes, Corollary~\ref{T-CW-complex-C}
allows us to apply this result % the Atiyah--Bredon sequence
to~$X$ even in the non-compact case;
here we use that the canonical $\TC$-homotopy described
in Section~\ref{proof-arbitrary} preserves orbit types.
% \comment{tom Dieck?}

We have
\begin{equation}\label{relative-cohomology-orbits}
  H_\TC^*(X_i,X_{i-1};\Z) = \bigoplus_{\sigma\in\Sigma_{n-i}}
                         H_\TC^*(\bar\O_\sigma,\partial\O_\sigma;\Z),
\end{equation}
and the differential
\begin{equation}
  \delta^i\colon H_\TC^*(X_i,X_{i-1};\Z) \to H_\TC^{*+1}(X_{i+1},X_i;\Z)
\end{equation}
is a ``block matrix'' whose components are the maps
\begin{equation}\label{differential-tau-sigma}
  H_\TC^*(\bar\O_\tau,\partial\O_\tau;\Z) \to
    H_\TC^{*+1}(\bar\O_\sigma,\partial\O_\sigma;\Z)
\end{equation}
for those pairs~$(\sigma,\tau)$ where
$\O_\tau\subset\bar\O_\sigma$, \ie,
where $\tau\in\Sigma_{n-i}$ is a facet of~$\sigma\in\Sigma_{n-i+1}$.

Now assume that $\tau\in\Sigma$ is maximal and of codimension~$k>0$.
Then, by maximality, % $\bar\O_\tau=\O_\tau$, and
$H_\TC^*(\bar\O_\tau,\partial\O_\tau;\Z)=H_\TC^*(\O_\tau;\Z)$ is
a direct summand of~$H_\TC^*(X_k,X_{k-1};\Z)$,
and no non-zero element of~$H_\TC^*(\O_\tau;\Z)$ can be
in the image of the differential~$\delta^{k-1}$.

Let $\sigma$ be a facet of~$\tau$. (If $\tau$ were the zero cone,
then $X$ would a complex torus and $H^1(X;\Z)\neq0$,
contrary to our assumptions.)
The toric variety~$\bar\O_\sigma$
is described by the star of~$\sigma$ in~$\Sigma$
(\cf~\cite[Sec.~3.1]{Fulton:93}),
which we denote by~$\hat\sigma$.
To compute the map~\eqref{differential-tau-sigma},
we replace $\bar\O_\sigma$ by the $\TC$-equivariantly homotopy equivalent
$\TC$-CW~complex~$Y=Y_{\hat\sigma}(\C)\subset\bar\O_\sigma$ and
$\partial\O_\sigma$ by~$Z=Y\cap\partial\O_\sigma$.
Note that $Y\cap\O_\tau$ is a single orbit~$\TC_\tau$
because $\tau$ is maximal.
Let $Y'$ be the space obtained from~$Y$ by replacing this orbit~$\TC_\tau$
by~$\TC_\sigma$, and similarly for~$Z'$.
(This means changing the identification for
the the points above the vertex~$\tau\in\FF(\hat\sigma)$
in~\eqref{topology-complete}.)
From the projection~$Y'\to Y$ we see that the map
\begin{equation}
  H_\TC^*(\bar\O_\tau,\partial\O_\tau;\Z) = H_\TC^*(\TC_\tau;\Z) \to
    H_\TC^{*+1}(Y,Z;\Z) = H_\TC^{*+1}(\bar\O_\sigma,\partial\O_\sigma;\Z)
\end{equation}
factors through
\begin{equation}
  H_\TC^*(\TC_\tau;\Z) \to H_\TC^*(\TC_\sigma;\Z) \to H_\TC^{*+1}(Y',Z';\Z) = H_\TC^{*+1}(Y,Z;\Z).
\end{equation}
The map~$H_\TC^*(\TC_\tau;\Z)\to H_\TC^*(\TC_\sigma;\Z)$ is
the canonical projection~$\Z[\tau]\to\Z[\sigma]$.
Pick a non-zero element~$f_\sigma$ in its kernel.
Then the product of all these~$f_\sigma$, as $\sigma$ runs through
the facets of~$\tau$, is a non-zero element
in the kernel of the differential~$\delta^k$. %
% $H_\TC^*(X_k,X_{k-1};\Z)\supset H_\TC^*(\O_\tau;\Z)\to H_\TC^{*+1}(X_{k+1},X_k;\Z)$.
As it does not lie in the image of~$\delta^{k-1}$,
we get a contradiction to the exactness of the Atiyah--Bredon sequence.
\end{proof}

\subsection{Step~3}

Equations \eqref{equivariant-cohomology-sigma}~and~%
\eqref{relative-cohomology-orbits} (for~$i=0$)  together give
a canonical isomorphism
\begin{equation}
  H_\TC^*(X_0;\Z) = \bigoplus_{\sigma\in\Sigma_n} H_\TC^*(X_\sigma;\Z).
\end{equation}
We finally show that under this isomorphism the kernel of the differential
\begin{equation}
  \delta^0\colon H^*(X_0;\Z)\to H^{*+1}_\TC(X_1, X_0;\Z)
  = \bigoplus_{\sigma\in\Sigma_{n-1}}
      H_\TC^*(\bar\O_\sigma,\partial\O_\sigma;\Z)
\end{equation}
coincides with that of the map~\eqref{mayer-vietoris}.

Since no cone~$\tau\in\Sigma_{n-1}$ is maximal, it is contained
in either one or two full-dimensional cones.
In the first case we have
\begin{equation}
  H_\TC^*(\bar\O_\tau,\partial\O_\tau;\Z) = H_\TC^*(\C, \{0\};\Z) = 0,
\end{equation}
and in the second case
\begin{equation}
  H_\TC^*(\bar\O_\tau,\partial\O_\tau;\Z) = H_\TC^*(\CP^1, \{0,\infty\};\Z)
  \cong \Z[\tau][+1],
\end{equation}
where the last isomorphism is chosen such that
if $\tau$ is the common facet of $\sigma_0$~and~$\sigma_1$,
$\sigma_0<\sigma_1$, then the differential is of the form
\begin{equation}\label{differential-X0-X1}
\begin{split}
  % \Z[\sigma_0]\oplus \Z[\sigma_1] =
  H_\TC^*(\O_{\sigma_0};\Z)\oplus H_\TC^*(\O_{\sigma_1};\Z)
  &\to H_\TC^{*+1}(\bar\O_\tau,\partial\O_\tau;\Z) \\ % = \Z[\tau] \\
  (f_0, f_1) &\mapsto \at{f_1}_{N_\tau} - \at{f_0}_{N_\tau}.
\end{split}
\end{equation}

Consider the following diagram,
where the vertical map on the right sends
each summand~$H_\TC^*(X_{\sigma_0\cap\sigma_1};\Z)=\Z[\tau]$
to~$0$ if $\tau=\sigma_0\cap\sigma_1$ is a not common facet, and
identically onto~$H_\TC^*(\bar\O_\tau,\partial\O_\tau;\Z)=\Z[\tau][+1]$
otherwise. % (Note that this map is surjective.)
\begin{diagram}[LaTeXeqno]
  0 & \rTo & H_\TC^*(X;\Z) & \rTo^{\iota^*}
    & % H_\TC^*(X_0;\Z) =
      \bigoplus_{\sigma\in\Sigma_n} H_\TC^*(\O_\sigma;\Z)
    & \rTo^{\delta^0}
    & % H_\TC^{*+1}(X_1, X_0;\Z) =
      \bigoplus_{\tau\in\Sigma_{n-1}}
        H_\TC^{*+1}(\bar\O_\tau,\partial\O_\tau;\Z) \\
  & & & & \dEq & & \uTo \\
  & & H_\TC^*(X;\Z) & \rTo^{\iota^*}
    & \bigoplus_{\sigma\in\Sigma_n} H_\TC^*(X_\sigma;\Z)
    & \rTo^{\delta}
    & \bigoplus_{\substack{\sigma_0,\sigma_1\in\Sigma_n \\ \sigma_0<\sigma_1}}
        H_\TC^*(X_{\sigma_0\cap\sigma_1};\Z)
\end{diagram}
The commutativity of the right square follows from formulas
\eqref{mayer-vietoris}~and~\eqref{differential-X0-X1}.

Since the differential~$\delta^0$ is the composition
of~$\delta$ and another map, % projection,
the kernel of~$\delta^0$ contains
that of~$\delta$. We know that $\ker\delta^0=H_\TC^*(X;\Z)$.
We also know that the map~$\iota^*$ induced by the inclusion
of the fixed point set is injective, and its image is contained in
the kernel of~$\delta$.
Hence $H_\TC^*(X;\Z)\subset\ker\delta\subset\ker\delta^0=H_\TC^*(X;\Z)$,
so both kernels agree.
This finishes the proof of Theorem~\ref{no-odd-degree}.

\section{Torsion-free cohomology}\label{torsion-free}

\noindent
We now turn our attention to toric varieties
whose equivariant cohomology is not only concentrated
in even degrees, but also torsion-free.
This property can be characterized nicely in terms
of equivariant cohomology; moreover, it behaves
well when passing to orbit closures.

\begin{lemma}\label{equivalence-freeness}
  The ordinary cohomology~%
  $H^*(X_\Sigma;\Z)$ is torsion-free and concentrated in even degrees
  iff the equivariant cohomology~$H_\TC^*(X_\Sigma;\Z)$
  is free over~$H^*(B\TC;\Z)$.
\end{lemma}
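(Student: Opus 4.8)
The plan is to prove both implications using the Serre spectral sequence of the Borel fibration $X_\Sigma \hookrightarrow X_{\TC\Sigma} \to B\TC$ (equivalently, the Eilenberg--Moore/module structure of $H_\TC^*(X_\Sigma;\Z)$ over $H^*(B\TC;\Z)$). The easy direction is the ``if'' part: assume $H_\TC^*(X_\Sigma;\Z)$ is free over $R := H^*(B\TC;\Z)$. Since $R$ is concentrated in even degrees and acts with the doubled grading, freeness forces $H_\TC^*(X_\Sigma;\Z)$ to be a direct sum of shifted copies of $R$; if any generator sat in odd total degree we could detect it, so I would first argue that the generators can be taken in even degrees --- this is where I would invoke Lemma~\ref{cef-conditions}, using condition~\eqref{criterion-c}: the canonical map $H_\TC^*(X_\Sigma;\Z) \to H^*(X_\Sigma;\Z)$ is reduction modulo the ideal $R^+$, and for a free module this is surjective, hence $H^*(X_\Sigma;\Z)$ is concentrated in even degrees. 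Then $H^*(X_\Sigma;\Z) \cong H_\TC^*(X_\Sigma;\Z)\otimes_R \Z$ is itself free over $\Z$, i.e.\ torsion-free.

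For the ``only if'' direction, assume $H^*(X_\Sigma;\Z)$ is torsion-free and concentrated in even degrees. By Lemma~\ref{cef-conditions} the Serre spectral sequence degenerates at $E_2$, so $H_\TC^*(X_\Sigma;\Z)$ has an associated graded (with respect to the filtration by $R$-submodules coming from the fibration) isomorphic to $H^*(B\TC;\Z)\otimes H^*(X_\Sigma;\Z)$ as an $R$-module. Since $H^*(X_\Sigma;\Z)$ is a free $\Z$-module concentrated in even degrees, this associated graded is a free $R$-module. The point is then to lift freeness from the associated graded to $H_\TC^*(X_\Sigma;\Z)$ itself: choose elements of $H_\TC^*(X_\Sigma;\Z)$ mapping to an $R$-basis of the associated graded and check they form an $R$-basis. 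Because $R$ is a polynomial ring (so Noetherian, graded, with $R_0=\Z$) and everything is finitely generated in each degree with the grading bounded below, a standard graded Nakayama / filtered-module argument shows that a set of elements whose images generate the associated graded freely must itself generate freely --- here one uses that $R$ is connected graded and that the filtration is exhaustive and compatible with the $R$-action.

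The main obstacle is this last lifting step, and in particular verifying that the Serre spectral sequence filtration on $H_\TC^*(X_\Sigma;\Z)$ is genuinely a filtration by $R$-submodules with the ``right'' associated graded as an $R$-module (the multiplicative structure of the spectral sequence gives this, but one must be slightly careful that the module structure on $E_\infty$ matches $H^*(B\TC)\otimes H^*(X_\Sigma)$ and not merely additively). A clean alternative that avoids spectral-sequence bookkeeping is to work over $\Z$ directly via the Atiyah--Bredon sequence \eqref{atiyah-bredon}, which by Proposition~\ref{maximal-full-dim} and Step~3 identifies $H_\TC^*(X_\Sigma;\Z)$ with $\PP(\Sigma;\Z) = \ker\delta$ in \eqref{mayer-vietoris}, a submodule of the free $R$-module $\bigoplus_{\sigma\in\Sigmamax}\Z[\sigma]$; freeness of this kernel can then be read off once one knows $H^*(X_\Sigma;\Z)$ is torsion-free and even. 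I would present the spectral-sequence argument as the main line and remark on the Atiyah--Bredon reformulation, since the latter meshes with the machinery already developed in Section~\ref{proof-no-odd-degree}.
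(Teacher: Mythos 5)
Your proposal is correct in substance, but it takes a somewhat different route from the paper in both directions, and one step is asserted more casually than it deserves. For the direction ``$H^*(X_\Sigma;\Z)$ even and torsion-free $\Rightarrow$ $H_\TC^*(X_\Sigma;\Z)$ free'', the paper simply notes that $\iota^*\colon H_\TC^*(X_\Sigma;\Z)\to H^*(X_\Sigma;\Z)$ is onto by Lemma~\ref{cef-conditions}, chooses a $\Z$-linear section (possible because $H^*(X_\Sigma;\Z)$ is free over~$\Z$), and invokes Leray--Hirsch; your plan of passing to the $E_\infty$-filtration and lifting a basis by a graded Nakayama argument is essentially a hand-made proof of Leray--Hirsch, so it works (the filtration is by $H^*(B\TC;\Z)$-submodules, finite in each degree, and $E_\infty=E_2=H^*(B\TC;\Z)\otimes H^*(X_\Sigma;\Z)$ is free because $H^*(X_\Sigma;\Z)$ is a free $\Z$-module of finite type by Corollary~\ref{T-CW-complex-C}), but it is longer than necessary. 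For the converse, your pivot is the claim that the canonical map ``is reduction modulo $R^+$, and for a free module this is surjective''. That identification $H^*(X_\Sigma;\Z)\cong H_\TC^*(X_\Sigma;\Z)\otimes_{H^*(B\TC;\Z)}\Z$ is true when $H_\TC^*$ is free, but it is not definitional: it needs an argument, e.g.\ induction over circle factors using the Gysin sequence (freeness makes multiplication by the Euler class injective, so the sequence splits) or the Eilenberg--Moore spectral sequence. The paper uses the same identification in its last line, but it derives evenness differently: freeness implies exactness of the Atiyah--Bredon sequence~\eqref{atiyah-bredon} by Franz--Puppe, hence $H_\TC^*(X_\Sigma;\Z)$ embeds into the evenly graded $H_\TC^*(X_\Sigma^\TC;\Z)$; you instead get evenness of $H^*(X_\Sigma;\Z)$ from Lemma~\ref{cef-conditions} (c)$\Rightarrow$(a), whose proof rests on the same injection, so the underlying inputs coincide. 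Two small caveats: the opening remark about ``detecting'' odd-degree generators is unsubstantiated, but your final argument never uses it; and the concluding aside that freeness ``can be read off'' from $H_\TC^*(X_\Sigma;\Z)=\ker\delta\subset\bigoplus_{\sigma}\Z[\sigma]$ in~\eqref{mayer-vietoris} is too optimistic, since a graded submodule of a free module over a polynomial ring in several variables need not be free -- that sketch would not replace the main argument.
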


\begin{proof}
  % In fact, this holds for any $T$-space~$X$.
  If $\Hodd(X_\Sigma;\Z)$ vanishes, then
  the map~$\iota^*\colon H_\TC^*(X_\Sigma;\Z)\to H^*(X_\Sigma;\Z)$
  is surjective by Lemma~\ref{cef-conditions}. % if $\Hodd(\Sigma;\Z)$ vanishes.
  If moreover $H^*(X_\Sigma;\Z)$ is free over~$\Z$,
  then there exists a section to~$\iota^*$, and
  $H_\TC^*(X_\Sigma;\Z)\cong H^*(X_\Sigma;\Z)\otimes H^*(B\TC;\Z)$
  is free over~$H^*(B\TC;\Z)$ by the Leray--Hirsch Theorem.

  Conversely, if $H_\TC^*(X_\Sigma;\Z)$ is free over~$H^*(B\TC;\Z)$,
  % then $\iota^*$ mu surjective.
  then the Atiyah--Bredon sequence~\eqref{atiyah-bredon} is exact, and
  $H_\TC^*(X_\Sigma;\Z)$ injects
  into~$H_\TC^*(X_\Sigma^T;\Z)=H^*(X_\Sigma^T;\Z)\otimes H^*(B\TC;\Z)$.
  Since $X_\Sigma^T$ is finite,
  this shows that $H_\TC^*(X_\Sigma;\Z)$ is concentrated in even degrees.
  Therefore, $H^*(X_\Sigma;\Z)=H_\TC^*(X_\Sigma;\Z)\otimes_{H^*(B\TC;\Z)}\Z$
  is torsion-free and concentrated in even degrees.
\end{proof}

\begin{proposition}\label{orbit-closure-cef}
  If $H^*(X_\Sigma;\Z)$ is torsion-free and concentrated in even degrees,
  then the same holds true
  for any orbit closure~$\bar\O_\sigma\subset X_\Sigma$.
  %, $\sigma\in\Sigma$
\end{proposition}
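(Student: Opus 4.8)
The plan is to use the characterization of Lemma~\ref{equivalence-freeness}: it suffices to show that $H_\TC^*(\bar\O_\sigma;\Z)$ is free over $H^*(B\TC;\Z)$, given that $H_\TC^*(X_\Sigma;\Z)$ is. The key geometric input is that $\bar\O_\sigma$ is itself a toric variety, namely the toric variety of the star $\hat\sigma$ of $\sigma$ in $\Sigma$ (cf.\ \cite[Sec.~3.1]{Fulton:93}); one must be a little careful, since this toric variety carries the action of the quotient torus $\TC/\TC_\sigma$ rather than of $\TC$, but $H_\TC^*(\bar\O_\sigma;\Z)$ and $H_{\TC/\TC_\sigma}^*(\bar\O_\sigma;\Z)$ differ only by a free polynomial factor coming from the splitting $H^*(B\TC;\Z)\cong H^*(B\TC_\sigma;\Z)\otimes H^*(B(\TC/\TC_\sigma);\Z)$, so freeness over one ring is equivalent to freeness over the other.

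The first step I would carry out is therefore to reduce, via Lemma~\ref{equivalence-freeness}, to the statement that $\bar\O_\sigma$ has even cohomology and torsion-free $H_\TC^*$, and to record the star description and the torus-quotient bookkeeping just mentioned. The second step is to produce the actual comparison between $H_\TC^*(X_\Sigma;\Z)$ and $H_\TC^*(\bar\O_\sigma;\Z)$. Here I would exploit the equivariant inclusion $\bar\O_\sigma\hookrightarrow X_\Sigma$ together with the fixed-point structure: every $\TC$-fixed point of $\bar\O_\sigma$ is a $\TC$-fixed point of $X_\Sigma$ (the fixed points of $X_\Sigma$ are the $x_\tau$ for $\tau$ full-dimensional, and those lying in $\bar\O_\sigma$ correspond exactly to the full-dimensional cones of $\Sigma$ containing $\sigma$), so there is a commuting square relating the localization maps $\iota_{X_\Sigma}^*$ and $\iota_{\bar\O_\sigma}^*$ to the fixed-point sets. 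Since $H_\TC^*(X_\Sigma;\Z)$ is free over $H^*(B\TC;\Z)$, the Atiyah--Bredon sequence \eqref{atiyah-bredon} for $X_\Sigma$ is exact and $\iota_{X_\Sigma}^*$ is injective; I want to transfer exactness/injectivity to $\bar\O_\sigma$.

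The cleanest route for that transfer, which I would take as the third step, is to argue skeleton by skeleton. The equivariant $i$-skeleton of $\bar\O_\sigma$ is the intersection of $\bar\O_\sigma$ with the equivariant $(i+\dim\sigma)$-skeleton of $X_\Sigma$ — more precisely, the orbit-closure stratification of $\bar\O_\sigma$ is induced by that of $X_\Sigma$, so the summands $H_\TC^*(\bar\O_\tau,\partial\O_\tau;\Z)$ appearing in \eqref{relative-cohomology-orbits} for $\bar\O_\sigma$ are precisely the summands of the corresponding groups for $X_\Sigma$ indexed by cones $\tau\ge\sigma$. Thus the Atiyah--Bredon complex of $\bar\O_\sigma$ is a subcomplex (a direct summand, cone-wise) of that of $X_\Sigma$; but exactness does not automatically pass to direct summands, so I expect the real work — and the main obstacle — to be here: showing that the Atiyah--Bredon complex of $\bar\O_\sigma$ is exact, equivalently (by \cite{FranzPuppe:07}, applicable thanks to Corollary~\ref{T-CW-complex-C}) that $H_\TC^*(\bar\O_\sigma;\Z)$ is free over $H^*(B\TC;\Z)$. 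I would handle this by running the localization argument of the proof of Proposition~\ref{maximal-full-dim} in reverse: first use the square of localization maps to see that $\iota_{\bar\O_\sigma}^*$ is injective (because $\iota_{X_\Sigma}^*$ is and the fixed points match up), which already gives that $H_\TC^*(\bar\O_\sigma;\Z)$ is even and torsion-free over $\Z$; then invoke Lemma~\ref{cef-conditions} to conclude $\Hodd(\bar\O_\sigma;\Z)=0$, and finally apply the easy direction of Lemma~\ref{equivalence-freeness} provided $H^*(\bar\O_\sigma;\Z)$ is torsion-free over $\Z$ — which follows since $H_\TC^*(\bar\O_\sigma;\Z)\hookrightarrow H_\TC^*(X_\Sigma;\Z)$ is a map of $H^*(B\TC;\Z)$-modules with the target free, hence the image is torsion-free, hence so is $H^*(\bar\O_\sigma;\Z)=H_\TC^*(\bar\O_\sigma;\Z)\otimes_{H^*(B\TC;\Z)}\Z$ once we know evenness. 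The one subtlety to check carefully is that the inclusion $H_\TC^*(\bar\O_\sigma;\Z)\to H_\TC^*(X_\Sigma;\Z)$ is indeed injective; this I would get from the commuting localization square together with injectivity of $\iota_{X_\Sigma}^*$ and the observation that restriction to fixed points factors $H_\TC^*(\bar\O_\sigma;\Z)\to H_\TC^*(X_\Sigma;\Z)\to H_\TC^*(X_\Sigma^\TC;\Z)$ compatibly.
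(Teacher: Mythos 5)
Your reduction via Lemma~\ref{equivalence-freeness} is reasonable in spirit, but the step that carries all the weight does not work: there is no map $H_\TC^*(\bar\O_\sigma;\Z)\to H_\TC^*(X_\Sigma;\Z)$ to be injective. Equivariant cohomology is contravariant, so the inclusion $\bar\O_\sigma\hookrightarrow X_\Sigma$ gives a restriction $H_\TC^*(X_\Sigma;\Z)\to H_\TC^*(\bar\O_\sigma;\Z)$, and the only commuting square available sends $H_\TC^*(X_\Sigma;\Z)\to H_\TC^*(X_\Sigma^\TC;\Z)$ down to $H_\TC^*(\bar\O_\sigma;\Z)\to H_\TC^*(\bar\O_\sigma^\TC;\Z)$; injectivity of the top map says nothing about the kernel of the bottom one. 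A wrong-way (Gysin) map would require smoothness/orientability hypotheses you do not have and would shift degrees, and without it your claimed factorization of $\iota^*_{\bar\O_\sigma}$ through $H_\TC^*(X_\Sigma^\TC;\Z)$ collapses. There is a second gap even granting injectivity of $\iota^*_{\bar\O_\sigma}$: evenness of $H_\TC^*(\bar\O_\sigma;\Z)$ does not by itself yield $\Hodd(\bar\O_\sigma;\Z)=0$ via Lemma~\ref{cef-conditions} (that lemma needs surjectivity of $H_\TC^*\to H^*$ as input, not evenness of $H_\TC^*$ as output; already $X=\C^*$ has even $H_\TC^*$ but $H^1\neq0$), nor does it license the identity $H^*(\bar\O_\sigma;\Z)=H_\TC^*(\bar\O_\sigma;\Z)\otimes_{H^*(B\TC;\Z)}\Z$, which presupposes the freeness you are trying to establish.

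The paper avoids any comparison map between the two equivariant cohomology rings. It first reduces to showing that $H^*(\bar\O_\sigma;\F_p)$ is even for every prime $p$, observes that $\bar\O_\sigma$ is a component of the fixed-point set $X_\Sigma^{\TC_\sigma}$ of a subtorus, and then applies the Allday--Puppe rank inequalities: for $X=X_\Sigma$ and $Y=X_\Sigma^G$ one has $\dim H^*(X;\F_p)\ge\dim H^*(Y;\F_p)\ge\dim H^*(Y^\TC;\F_p)$, with equality forcing freeness of $H_\TC^*(Y;\F_p)$. Since $Y^\TC=X^\TC$ and the hypothesis makes the outer terms equal (via Lemma~\ref{equivalence-freeness} and reduction mod $p$), all inequalities are equalities, so $H_\TC^*(Y;\F_p)$ surjects onto $H^*(Y;\F_p)$ and the latter is even. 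The key point your argument is missing is exactly this mechanism --- the coincidence of fixed-point sets $Y^\TC=X^\TC$ together with a numerical (localization-theorem) comparison over $\F_p$ --- which substitutes for the nonexistent injection you rely on.
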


Recall that $\bar\O_\sigma$ is again a toric variety,
defined by the star of~$\sigma$ in~$\Sigma$.

\begin{proof}
  Note first that it is enough to prove
  that $H^*(\bar\O_\sigma;\F_p)$ is concentrated in even degrees
  for all primes~$p$.
  % the statement for all prime fields~$k=\F_p$ instead of~$\Z$.
  Moreover,
  since $\bar\O_\sigma$ is a component of~$X_\Sigma^{\TC_\sigma}$,
  it suffices to consider fixed point sets~$X_\Sigma^G$,
  where $G\subset \TC$ is any subtorus.

  We use that for a (sufficiently ``nice'') $\TC$-space $X$ one has
  \begin{equation}
    \dim H^*(X;\F_p) \ge \dim H^*(X^\TC;\F_p)
  \end{equation}
  with equality iff
  % $X$ is cef,
  $H_\TC^*(X;\F_p)$ is free over~$H^*(B\TC;\F_p)$,
  \cf~\cite[Cor.~3.1.14 \&~3.1.15]{AlldayPuppe:93}.
  (There rational coefficients are used. However, a look at the proof shows
  that in the case of connected isotropy groups coefficients can be taken
  in any field.)
  
  Set $X=X_\Sigma$ and $Y=X_\Sigma^G$.
  In this case we have
  \begin{equation}
    \dim H^*(X;\F_p) \ge \dim H^*(Y;\F_p) \ge \dim H^*(Y^\TC;\F_p).
  \end{equation}
  Since $Y^\TC=X^\TC$, all inequalities must be equalities.
  Therefore $H_\TC^*(Y^G;\F_p)$ surjects onto~$H^*(Y^G;\F_p)$,
  so the latter is concentrated in even degrees.
\end{proof}

\begin{question}
  Is the property ``$\Hodd(X_\Sigma;\Z)=0$'' inherited by orbit closures
  even in the presence of torsion?
\end{question}

In the course of the proof of Lemma~\ref{equivalence-freeness}
we showed that if $\Hodd(X_\Sigma;\Z)$
%the odd-dimensional cohomology of~$X_\Sigma$
vanishes,
then $H_\TC^*(X_\Sigma;\Z)$
% the equivariant cohomology of~$X_\Sigma$
injects into the free $H^*(B\TC;\Z)$-module $H_\TC^*(X_\Sigma^\TC;\Z)$,
so $H_\TC^*(X_\Sigma;\Z)$ cannot have $\Z$-torsion.
For a general $\TC$-space~$X$,
this last property together with the degeneration
of the Serre spectral sequence
does \emph{not} guarantee
that
$H^*(X;\Z)$
% the ordinary cohomology
itself is torsion-free.
(See \cite[Ex.~5.2]{FranzPuppe:07} for a counterexample.)
But Proposition~\ref{no-torsion-smooth-compact} asserts
that this conclusion is valid for toric varieties which are smooth or compact.

\begin{proof}[Proof of Proposition~\ref{no-torsion-smooth-compact}]
  We again write $X=X_\Sigma$.
  Assume first that $X$ is compact.
  Then all terms~$H_\TC^{*+i}(X_i, X_{i-1};\Z)$
  in the Atiyah--Bredon sequence~\eqref{atiyah-bredon}
  are free over~$\Z$: in fact, equation~\eqref{relative-cohomology-orbits}
  becomes
  \begin{equation}\label{relative-cohomology-orbits-2}
  \begin{split}
    H_\TC^*(X_i,X_{i-1};\Z)
    &= \bigoplus_{\sigma\in\Sigma_{n-i}} H_\TC^*(\bar\O_\sigma,\partial\O_\sigma;\Z)\\
    &= \bigoplus_{\sigma\in\Sigma_{n-i}} H_\TC^*(\O_\sigma;\Z)[+i]
     \cong \bigoplus_{\sigma\in\Sigma_{n-i}} \Z[\sigma][+i].
  \end{split}
  \end{equation}
  (This is the $E_1$~term of the spectral sequence
  considered in~\cite{Fischli:92}.)
  $H_\TC^*(X;\Z)$, being a submodule of~$H_\TC^*(X_0;\Z)$, is free over~$\Z$ as well.
  Hence, the Atiyah--Bredon sequence over a finite field~$\F_p$
  is obtained by tensoring the integral version~\eqref{atiyah-bredon}
  with~$\F_p$, and this does not affect exactness.

  It actually holds true for any field~$k$ as coefficients that the
  exactness of the Atiyah--Bredon sequence implies
  the freeness of~$H_\TC^*(X;k)$ over~$H^*(B\TC;k)$.
  (This can be seen by inspecting the proofs in~%
  \cite[Sec.~4.8]{BarthelBrasseletFieselerKaup:02}
  or~\cite[Sec.~4]{FranzPuppe:07}.)
  Since $H_\TC^*(X;k)$ % =H^*(X;k)\otimes H^*(B\TC;k)$
  injects into~$H_\TC^*(X_0;k)$ and the latter module
  is concentrated in even degrees, the same applies to the former,
  hence also to its quotient~$H^*(X;k)$.
  By considering the Universal Coefficient Theorem
  for prime fields~$k=\F_p$, one sees that
  this is impossible if $H^{\rm even}(X;\Z)$ has torsion.

  Suppose now that $X$ is smooth, and let $\tilde X$ be a toric
  compactification of~$X$, \cf~Section~\ref{proof-arbitrary}.
  Set $Z=\tilde X\setminus X$. By Lefschetz duality
  (\cf~\cite[Thm.~70.2]{Munkres:84}),
  $H^*(\tilde X,Z;\Z)=H_{2n-*}(X;\Z)$ is also concentrated in even degrees,
  and the reasoning for the compact case carries over to
  the pair~$(\tilde X,Z)$ instead of~$X$. Hence,
  $H^*(\tilde X,Z;\Z)$ is torsion-free, and therefore $H^*(X;\Z)$
  as well.  
\end{proof}

Assume that $H^*(X_\Sigma;\R)$ is concentrated in even degrees.
Then a reasoning analogous to that in Section~\ref{proof-no-odd-degree}
shows that $H_\TC^*(X_\Sigma;\R)=\PP(X_\Sigma;\R)$ is free
over the polynomial ring~$H^*(B\TC;\R)$.
A result of Yuzvinsky \cite[Cor.~3.6]{Yuzvinsky:92} implies that the reduced
homology of all links in~$\Sigma$ vanishes except in top degrees.
(In the case of a simplicial fan,
this is Reisner's Cohen--Macaulay criterion,
\cf~\cite[Cor.~5.3.9]{BrunsHerzog:98}.)
For compact toric varieties, we can give a short topological proof
of this fact, even with integer coefficients.

\begin{proposition}\label{free-link}
  If $X_\Sigma$ is compact and
  $H^*(X_\Sigma;\Z)$ concentrated in even degrees,
  then $\tilde H_i(\lk\sigma;\Z)=0$ for all $\sigma\in\Sigma$
  and all~$i<n-\dim\sigma-1$.
\end{proposition}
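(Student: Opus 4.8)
The plan is to reduce to the case $\sigma=0$ and then to read off the conclusion from the explicit description of the non-negative part of a compact toric variety worked out in Section~\ref{proof-complete-positive}.

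First I would pass to the orbit closure $\bar\O_\sigma$. This is the toric variety of the star of $\sigma$ in $\Sigma$, a fan in $N(\sigma)_\R$ of rank $m:=n-\dim\sigma$; since $\Sigma$ is complete so is this star fan, whence $\bar\O_\sigma$ is compact. Moreover $\lk\sigma$ is, essentially by definition, the order complex of the nonzero cones of the star fan, that is, the link of its zero cone. (Proposition~\ref{orbit-closure-cef} together with Proposition~\ref{no-torsion-smooth-compact} also shows that $H^*(\bar\O_\sigma;\Z)$ is again concentrated in even degrees, though for what follows only the compactness of $\bar\O_\sigma$ will be needed.) It therefore suffices to prove the case $\sigma=0$: for a compact toric variety $X_\Sigma$ of complex dimension $m$ one has $\tilde H_i(L;\Z)=0$ for $i<m-1$, where $L:=|\FF(\Sigma\setminus\{0\})|$.

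For this I would invoke Theorem~\ref{main-result} with $k=\R_+$ in the concrete form established in Section~\ref{proof-complete-positive}: the non-negative part $X_\Sigma(\R_+)$ is homeomorphic to $|\FF(\Sigma)|$ by a triangulation under which the vertex of $\FF(\Sigma)$ labelled by a cone $\tau$ goes to the distinguished point $x_\tau$. Because the zero cone is the minimum of the poset $\Sigma$, it lies in every maximal simplex of $\FF(\Sigma)$, so $|\FF(\Sigma)|$ is the simplicial cone over $L$ with apex the vertex of the zero cone, and this apex corresponds to $x_0$. Now $x_0$ lies in the open orbit $\O_0(\R_+)$, which is an open subset of $X_\Sigma(\R_+)$ homeomorphic to $\R^m$. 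Combining the cone structure with excision gives
\begin{equation*}
  \tilde H_{i-1}(L;\Z)\;\cong\;H_i\bigl(X_\Sigma(\R_+),\,X_\Sigma(\R_+)\setminus\{x_0\};\Z\bigr)\;\cong\;H_i(\R^m,\R^m\setminus\{0\};\Z),
\end{equation*}
which vanishes except when $i=m$. Thus $L$ is in fact a homology $(m-1)$-sphere, and in particular $\tilde H_i(L;\Z)=0$ for $i<m-1$, as claimed.

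I do not expect a serious technical obstacle: once $X_\Sigma$ is compact, the statement collapses to the elementary fact that $X_\Sigma(\R_+)$ is a closed ball whose apex sits inside a Euclidean orbit. The step that requires the most care is the bookkeeping of the first paragraph --- identifying $\lk\sigma$ with the link of the zero cone of the star fan, and using that star fans of complete fans are complete --- together with the observation, read off from the construction in Section~\ref{proof-complete-positive}, that the apex of $|\FF(\Sigma)|$ really is $x_0$ and therefore lies in the open orbit. By contrast, the field-coefficient version extracted from Yuzvinsky's theorem makes no compactness hypothesis and there genuinely depends on the freeness of $H_\TC^*(X_\Sigma)$ over $H^*(B\TC)$; the point of the present argument is that in the compact case the integral statement is essentially automatic.
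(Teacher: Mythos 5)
Your proof is correct, but it takes a genuinely different route from the paper's. The reduction to $\sigma=0$ via orbit closures is the same in both arguments (the paper invokes Propositions \ref{no-torsion-smooth-compact} and \ref{orbit-closure-cef} only because its treatment of the case $\sigma=0$ still uses the even-degree hypothesis, which therefore has to be propagated to $\bar\O_\sigma$; in your argument only compactness needs to propagate). For $\sigma=0$ the paper argues through equivariant cohomology: compactness gives $H_\TC^{*+i}(X_i,X_{i-1};\Z)\cong\bigoplus_{\sigma\in\Sigma_{n-i}}\Z[\sigma][+i]$, the degree-zero row of the Atiyah--Bredon sequence is identified with the complex computing the closed-support homology of $|\Sigma|$, that is, the shifted reduced homology of $\lk 0$, and its exactness is deduced from the exactness of \eqref{atiyah-bredon}, which is where the hypothesis $\Hodd(X_\Sigma;\Z)=0$ enters (via Franz--Puppe together with Corollary \ref{T-CW-complex-C}). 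You instead read everything off from Section \ref{proof-complete-positive}: $X_\Sigma(\R_+)\approx|\FF(\Sigma)|$ is the simplicial cone on $|\FF(\Sigma\setminus\{0\})|$ with apex $x_0$, the apex lies in the open orbit $\O_0(\R_+)$, homeomorphic to Euclidean space, and excision turns the local homology at $x_0$ into the statement that $\lk 0$ is an integral homology sphere of the expected dimension. This is more elementary, and it makes explicit that in the compact case the hypothesis on $H^*(X_\Sigma;\Z)$ is not actually needed --- a point consistent with the paper's own identifications, since for a complete fan the closed-support homology of $|\Sigma|=N_\R$ is concentrated in degree $n$ in any event. What the paper's formulation buys is an argument whose shape matches Yuzvinsky's criterion and the rest of Section \ref{torsion-free}: expressing the link homology as a piece of the Atiyah--Bredon complex is the version that remains meaningful for non-complete fans, where the links are no longer spheres and the freeness hypothesis genuinely matters, and the same degree-zero row reappears in the proof of Proposition \ref{no-odd-hereditary}.
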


\begin{proof}
  By Propositions \ref{no-torsion-smooth-compact}~and~\ref{orbit-closure-cef},
  it is enough to consider the case~$\sigma=0$.

  Because $X=X_\Sigma$ is compact, we have
  \begin{equation} %\label{relative-cohomology-orbits-2}
    H_\TC^*(X_i,X_{i-1};\Z)
     \cong \bigoplus_{\sigma\in\Sigma_{n-i}} \Z[\sigma][+i],
  \end{equation}
  where the isomorphism is determined by a choice of orientations
  of the cones. Hence, the part
  \begin{equation} %\label{atiyah-bredon}
      % 0
      % \longrightarrow H_\TC^*(X;\Z)
      % \stackrel{\iota^*}\longrightarrow
    H_\TC^0(X_0;\Z)
      \stackrel{\delta^0}\longrightarrow H_\TC^{1}(X_1, X_0;\Z)
      \stackrel{\delta^1}\longrightarrow % \cdots \\
      \cdots
      \stackrel{\delta^{n-1}}\longrightarrow H_\TC^{n}(X_n, X_{n-1};\Z)
      \longrightarrow 0.
  \end{equation}
  of the Atiyah--Bredon sequence computes the homology
  with closed support of~$|\Sigma|$. This is, up to a degree shift by~$1$,
  the homology of link of the zero cone.
  Since $H^*(X;\Z)$ is concentrated in even degrees,
  this sequence is exact,
  which means $\tilde H_i(\lk 0;\Z)=0$ for all~$i<n-1$.
\end{proof}

We conclude with a remark about hereditary fans.
A fan~$\Sigma$ is called \emph{hereditary}
if all maximal cones are full-dimensional
and if for every~$\tau\in\Sigma$ one has that
\begin{equation}\label{property-hereditary}
  \advance\textwidth by -1in
  \left\{\;
  \parbox[c]{\textwidth}{%
    any two maximal cones~$\sigma$,~$\sigma'$ in the star of~$\tau$
    can be joined
    by a sequence~$\sigma=\sigma_0$,~\ldots,~$\sigma_k=\sigma'$ of
    maximal cones in the star of~$\tau$ such that
    $\sigma_{i-1}$ and $\sigma_i$ have a common facet, $1\le i\le k$;}
  \right.
\end{equation}
see \cite{BilleraRose:92}.

\begin{proposition}\label{no-odd-hereditary}
  If $H^*(X_\Sigma;\Z)$ is concentrated in even degrees,
  then $\Sigma$ is hereditary.
\end{proposition}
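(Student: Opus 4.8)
The plan is to deduce that $\Sigma$ is hereditary from the structural facts about toric varieties with vanishing odd cohomology established earlier in the paper. First I would recall, via Proposition~\ref{maximal-full-dim}, that the hypothesis already forces all maximal cones of~$\Sigma$ to be full-dimensional, so only the connectivity condition~\eqref{property-hereditary} remains to be verified. By Proposition~\ref{orbit-closure-cef} (together with the observation recorded there that the star of~$\tau$ defines the orbit closure~$\bar\O_\tau$, which is itself a toric variety with cohomology concentrated in even degrees), it suffices to treat the case~$\tau=0$: in other words, I must show that any two full-dimensional cones of a complete-enough fan~$\Sigma$ with $\Hodd(X_\Sigma;\Z)=0$ can be connected by a chain of full-dimensional cones in which consecutive members share a facet.

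The key step is to recognize that this connectivity statement is exactly the assertion that a certain graph is connected, and that this graph's connectivity is measured by the low end of the Atiyah--Bredon sequence~\eqref{atiyah-bredon}. Concretely, I would look at the tail
\begin{equation*}
  H_\TC^*(X;\Z)\stackrel{\iota^*}\longrightarrow H_\TC^*(X_0;\Z)
  \stackrel{\delta^0}\longrightarrow H_\TC^{*+1}(X_1,X_0;\Z),
\end{equation*}
which by Lemma~\ref{cef-conditions} and the result of Franz--Puppe is exact at~$H_\TC^*(X_0;\Z)$. As identified in Step~3, the kernel of~$\delta^0$ is the module of piecewise polynomials, and $\iota^*$ here is the diagonal-type inclusion of $H_\TC^*(X;\Z)$ into $\bigoplus_{\sigma\in\Sigma_n}\Z[\sigma]$. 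In degree~$0$ this reads: the space of locally constant functions on the "dual graph" of~$\Sigma$ (vertices the full-dimensional cones, edges the shared facets) that lie in the image of $H^0_\TC(X;\Z)=\Z$ is all of the kernel of~$\delta^0$ in degree~$0$. Equivalently, every collection of integers $(c_\sigma)_{\sigma\in\Sigma_n}$ that agrees across every common facet is constant; this says precisely that the dual graph is connected, which is condition~\eqref{property-hereditary} for~$\tau=0$.

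So the proof is: reduce to $\tau=0$ using Propositions~\ref{maximal-full-dim} and~\ref{orbit-closure-cef}; embed $\Sigma$ in a complete fan if necessary (the needed exactness of the Atiyah--Bredon sequence holds for general~$X_\Sigma$ by the remark after Corollary~\ref{T-CW-complex-C}, but one must check that restricting to a subfan does not spoil vanishing of odd cohomology — alternatively work directly with the non-complete $\Sigma$ since $H^0_\TC(X;\Z)=\Z$ exactly when $X$ is connected, which a fan always yields); then read off connectivity of the dual graph from exactness at the first spot of the Atiyah--Bredon sequence in degree~$0$. The main obstacle I anticipate is bookkeeping at the boundary: making sure that the degree-$0$ piece of~$H^{*+1}_\TC(X_1,X_0;\Z)$, described in Step~3 as a sum of copies of $\Z[\tau][+1]$ over codimension-one cones~$\tau$ contained in two maximal cones, really does detect the "difference across an edge" of the dual graph and nothing else, so that $\ker\delta^0$ in degree~$0$ is genuinely the locally constant functions on that graph — this is a direct unwinding of formula~\eqref{differential-X0-X1}, but it is the step where one could slip. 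Once that identification is clean, exactness (hence $\ker\delta^0=\operatorname{im}\iota^*=\Z\cdot 1$) immediately gives connectivity, completing the proof.
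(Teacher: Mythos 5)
Your argument is essentially the paper's own proof: full-dimensionality of maximal cones from Proposition~\ref{maximal-full-dim}, reduction of general~$\tau$ to~$\tau=0$ via the orbit closure~$\bar\O_\tau$ and Proposition~\ref{orbit-closure-cef}, and then reading off connectivity of the dual graph from the degree-zero part of the exact Atiyah--Bredon sequence, where $\ker\delta^0$ is the module of piecewise constant functions and $\operatorname{im}\iota^*=H_\TC^0(X_\Sigma;\Z)\cong\Z$. The parenthetical detour about passing to a complete fan is unnecessary (exactness of~\eqref{atiyah-bredon} holds for general~$\Sigma$ thanks to Corollary~\ref{T-CW-complex-C}), and you correctly note the alternative of working with~$\Sigma$ directly, which is what the paper does.
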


\begin{proof}
  We know from Proposition~\ref{maximal-full-dim}
  that all maximal cones in~$\Sigma$ are full-di\-men\-sional.
  Group these cones in~$\Sigma$ into
  ``connected components'' in the sense that all cones
  in a component can be connected by full-dimensional cones
  sharing a common facet. Then the number of these components
  is the dimension of the free $\Z$-module of ``piecewise constant functions''
  on~$\Sigma$, in other words, the dimension of the kernel
  of the differential~$\delta^0$ in the
  Atiyah--Bredon sequence. But this equals $\dim H_\TC^0(X_\Sigma;\Z)=1$
  because the sequence is exact. So condition~\eqref{property-hereditary}
  holds for~$\tau=0\in\Sigma$, the zero cone.

  To reduce the case of general~$\tau\in\Sigma$ to
  the case~$\tau=0$, we consider
  the orbit closure~$\bar\O_\tau\subset X_\Sigma$.
  By Proposition~\ref{orbit-closure-cef},
  % $\bar\O_\tau$ is cef over~$\Q$, which is to say that
  $H^*(\bar\O_\tau;\Z)$ is torsion-free and concentrated in even degrees.
  Therefore, condition~\eqref{property-hereditary} holds
  for the zero cone in the star of any~$\tau$, which means that
  it holds for all~$\tau\in\Sigma$.
\end{proof}

We leave it to the reader to check that
it would actually be enough to assume $\Hodd(X_\Sigma;\Q)=0$.
But even over the rationals one cannot hope for a converse
to Proposition~\ref{no-odd-hereditary}:
An example originally due to Eikelberg and further studied
by Barthel--Brasselet--Fieseler--Kaup
\cite[Ex.~3.5]{BarthelBrasseletFieselerKaup:96}
shows that two combinatorially equivalent complete fans~$\Sigma$ and~$\Sigma'$
in~$\R^3$
can lead to toric varieties $X_\Sigma$ and~$X_{\Sigma'}$ with
$\Hodd(X_\Sigma;\Q)=0$ and $\Hodd(X_{\Sigma'};\Q)=H^3(X_{\Sigma'};\Q)\cong\Q$.

\end{document}